\newcommand{\address}[1]{#1}
\DeclareMathOperator{\gp}{gp}
\DeclareMathOperator{\diam}{diam}
\DeclareMathOperator{\tp}{tp}
\DeclareMathOperator{\g}{g}
\DeclareMathOperator{\cp}{\,\square\,}
\newtheorem{theorem}{Theorem}[section]
\newtheorem{lemma}[theorem]{Lemma}
\newtheorem{corollary}[theorem]{Corollary}
\newtheorem{proposition}[theorem]{Proposition}
\newtheorem{conjecture}[theorem]{Conjecture}
\theoremstyle{definition}
\newtheorem{definition}[theorem]{Definition}
\tikzset{middlearrow/.style={
		decoration={markings,
			mark= at position 0.75 with {\arrow[scale=2]{#1}} ,
		},
		postaction={decorate}
	}
}
\tikzset{midarrow/.style={
		decoration={markings,
			mark= at position 0.75 with {\arrow[scale=2]{#1}} ,
		},
		postaction={decorate}
	}
}
\begin{document}
	
	\title{Lower General Position in Cartesian Products}
	\author{Eartha Kruft Welton $^{a,b}$ \\ \texttt{\footnotesize kruftweltonei@cardiff.ac.uk}
		\and
		Sharif Khudairi $^{b}$ \\ \texttt{\footnotesize
			khudairis@cardiff.ac.uk}
		\and
		James Tuite $^{a}$ \\ \texttt{\footnotesize james.t.tuite@open.ac.uk}        
	}

	\maketitle	
	
	\address{
		\noindent	
		$^a$ School of Mathematics and Statistics, Open University, Milton Keynes, UK \\
		$^b$ School of Mathematics, University of Cardiff, UK 
	}
	
	\begin{abstract}
		A subset $S$ of vertices of a graph $G$ is in \emph{general position} if no shortest path in $G$ contains three vertices of $S$. The \emph{general position problem} consists of finding the number of vertices in a largest general position set of $G$, whilst the \emph{lower general position problem} asks for a smallest maximal general position set. In this paper we determine the lower general position numbers of several families of Cartesian products. We also show that the existence of small maximal general position sets in a Cartesian product is connected to a special type of general position set in the factors, which we call a \emph{terminal set}, for which adding any vertex $u$ from outside the set creates three vertices in a line with $u$ as an endpoint. We give a constructive proof of the existence of terminal sets for graphs with diameter at most three. We also present conjectures on the existence of terminal sets for all graphs and a lower bound on the lower general position number of a Cartesian product in terms of the lower general position numbers of its factors.
	\end{abstract}
	
	\noindent
	{\bf Keywords:} general position number; universal line; Cartesian product
	
	\noindent
	AMS Subj.\ Class.\ (2020): 05C12, 05C69, 05C76
	
	\section{Introduction}\label{sec:intro}
	
	The lower general position problem originates in the puzzles of two of the best known recreational mathematicians of modern history, Henry Dudeney and Martin Gardner. Dudeney posed the following chessboard puzzle in~\cite{dudeney-1917}: how many pawns can be placed on an $n \times n$ chessboard if we do not allow any three pawns to lie on a common line in the plane? In Gardner's column in Scientific American~\cite{Gar}, he suggested investigating the `worst-case scenario' of this problem by finding the smallest no-three-in-line configurations of pawns on the chessboard that cannot be extended by adding another pawn.
	
	The no-three-in-line problem was extended to the setting of graph theory in~\cite{ullas-2016,manuel-2018}. A set $S$ of vertices of a graph $G$ is in \emph{general position} if no shortest path of $G$ contains more than two vertices of $S$. The \emph{general position number} $\gp (G)$ is the cardinality of a largest general position set of $G$. Inspired by Gardner's suggestion, a recent paper~\cite{SteKlaKriTuiYer} studied the \emph{lower position number} $\gp ^-(G)$ of a graph, which the authors defined to be the number of vertices in a smallest maximal general position set of $G$. The lower general position number can be seen as representing the worst-case output of a greedy search for general position sets. 
	
	There are several papers on the general position numbers of the Cartesian product of graphs. The first paper to treat this problem was~\cite{Ghorbani-2021}; in this article the lower bound $\gp (G \cp H) \geq \gp (G)+\gp (H)-2$ is deduced and this is used to find a lower bound on the general position number of the product of an arbitrary number of complete graphs. In~\cite{KlaPatRusYer} the authors determine the exact values of the general position numbers of cylinder graphs $P_r \cp C_s$ and strong bounds for torus graphs $C_r \cp C_s$, as well as counting all maximum general position sets in a Cartesian grid $P_r \cp P_s$ and giving lower bounds for Cartesian powers. The paper~\cite{TianXu} discusses the case when one of the factors has a small diameter and in~\cite{TianXuKlav} the general position number of the product of two trees is determined. The latest work on this subject~\cite{KorVes} largely resolves the general position problem in torus graphs and gives some partial results on hypercubes.
	
	More recently, variants of the general position problem have been investigated for Cartesian products; they are treated in the context of the mutual visibility problem in~\cite{CicSteKla} and~\cite{KorVes2}, the edge general position problem in~\cite{ManPraKla} and the monophonic position problem in~\cite{NeeChaTui} (for background on the mutual visibility and monophonic position problems, consult~\cite{Stef} and~\cite{ThoChaTui} respectively).
	
	In this paper we study the lower general position problem in Cartesian products. We demonstrate that this problem is connected with a special type of general position set in a graph, which we call a \emph{terminal set}. In Section~\ref{sec:bounds} we present a bound on the lower general position number of Cartesian products using terminal sets and use this to derive some exact values. We also discuss products with complete graphs and give a conjecture for a lower bound on the lower general position number of a Cartesian product. In Section~\ref{sec:existence} we tackle the problem of the existence of terminal sets and show that they always exist in graphs with diameters at most three using an algorithmic proof. We also demonstrate existence for chordal graphs and cographs and conclude with a conjecture on the existence of terminal sets. Finally in Section~\ref{sec:terminal pos num} we derive some exact values for the largest and smallest order of terminal position sets in some common graph families and apply this to find the lower general position number of the product of complete multipartite graphs.   
	
	Throughout this paper a graph $G$ will mean a simple, undirected graph, with vertex set $V(G)$ and edge set $E(G)$. We will write $u \sim v$ if the vertices $u,v \in V(G)$ are adjacent. The neighbourhood of $u$ is the set $N(u) = \{ v \in V(G): u \sim v\} $. Two vertices $u,v$ are \emph{twins} if they have the same neighbourhood, i.e. $N(u) = N(v)$.  A graph $H$ is a subgraph of $G$ if $V(H) \subseteq V(G)$ and $E(H) \subseteq E(G)$; a subgraph $H$ of $G$ is \emph{induced} if for any $u,v \in V(H)$ we have $u \sim v$ in $H$ if and only if $u \sim v$ in $G$. For any subset $W \subseteq V(G)$ the subgraph of $G$ induced by $W$ will be written $G[W]$. The \emph{line graph} $L(G)$ of $G$ is the graph with vertex set equal to $E(G)$, with an edge in $L(G)$ between $e_1,e_2 \in E(G)$ if and only if $e_1$ and $e_2$ are incident to a common vertex.  
	
	A path $P_{\ell +1}$ of length $\ell $ in $G$ is a sequence $u_0,u_1,\dots ,u_{\ell }$ of distinct vertices of $G$ such that $u_i \sim u_{i+1}$ for $0 \leq i \leq \ell -1$. The distance $d(u,v)$ between two vertices $u$ and $v$ of $G$ is the length of a shortest path from $u$ to $v$; if we wish to specify in which graph the distance is taken we will include a subscript, e.g. $d_G(u,v)$. The greatest value of $d(u,v)$ over all pairs of vertices $u,v$ in $G$ is the \emph{diameter} $\diam (G)$ of $G$. A cycle of length $\ell $ is a sequence $u_0,u_1,\dots ,u_{\ell -1}$ such that $u_i \sim u_{i+1}$ for $1 \leq i \leq \ell -2$ and $u_0 \sim u_{\ell -1}$. The wheel graph $W_n$ is the join $C_{n-1} \vee K_1$, i.e. a cycle of length $n-1$ with an added vertex that is adjacent to every vertex of the cycle. A graph is \emph{chordal} if all of its induced cycles have length three (equivalently, any cycle with length at least four has a chord). A graph is a \emph{cograph} if it does not contain any induced copies of $P_4$.
	
	A set $S$ of vertices of $G$ is \emph{geodetic} if for any vertex $u \in V(G) - S$ there exist vertices $x,y \in S$ such that $u$ lies on a shortest $x,y$-path in $G$; the number of vertices in a smallest geodetic set of $G$ is the \emph{geodetic number} of $G$ and is denoted by $\g (G)$. A clique or complete graph $K_n$ is a graph on $n$ vertices such that every pair of distinct vertices is adjacent. More generally, a complete multipartite graph $K_{r_1,r_2,\dots ,r_t}$ is a graph such that the vertex set can be partitioned into independent sets $X_1, X_2, \dots ,X_t$, where $|X_i| = r_i$ for $1 \leq i \leq t$, such that $u \sim v$ if and only if $u$ and $v$ lie in different parts of the partition.
	
	The Cartesian product $G \cp H$ of two graphs $G$ and $H$ is defined to be the graph with vertex set $V(G) \times V(H)$ and an edge between vertices $(u_1,v_1)$ and $(u_2,v_2)$ if and only if either i) $u_1 = u_2$ and $v_1 \sim v_2$ in $H$ or ii) $u_1 \sim u_2$ in $G$ and $v_1 = v_2$. For any $h \in V(H)$, the subgraph of $G \cp H$ induced by $V(G) \times \{ h\} $ is isomorphic to $G$; we call this subgraph a \emph{$G$-layer} of $G \cp H$ and denote it by $G^h$. Similarly, a $H$-layer $^gH$ is the subgraph of $G \cp H$ induced by the subset $\{ g\} \times V(H)$ of $V(G) \times V(H)$ and is isomorphic to $H$. If $P$ is a path $u_0,u_1,\dots ,u_{\ell }$ in $G$, then for any $v \in V(H)$ the path $(u_0,v),(u_1,v),\dots ,(u_{\ell },v)$ in $G \cp H$ will be written $P_v$, with a similar notation $_uQ$ for $u \in V(G)$ and a path $Q$ in $H$. The operation $\cp $ is associative and commutative (up to isomorphism), so in Cartesian products of three or more factors we will drop brackets. The distance between vertices $(u,v),(u',v') \in V(G \cp H)$ is given by \[ d_{G \cp H}((u,v),(u',v')) = d_G(u,u')+d_H(v,v').\]  We define the projection functions $\pi _1: G \cp H \rightarrow G$ and $\pi _2: G \cp H \rightarrow H$ by $\pi _1(x,y) = x$ and $\pi _2 (x,y) = y$ for all $(x,y) \in V(G) \times V(H)$. For a detailed discussion of the structure of Cartesian products, see the book~\cite{ikr}. For other graph-theoretical terminology not defined here, we refer the reader to~\cite{BonMur}.   
	
	\section{Bounds and exact values}\label{sec:bounds}
	
	In this section we introduce the notion of \emph{terminal set} and its applications to finding small general position sets in Cartesian products. It was shown in~\cite{SteKlaKriTuiYer} that a graph $G$ has a maximal general position set of order two, i.e. $\gp ^-(G) = 2$, if and only if $G$ contains a \emph{universal line}. The line ${\cal L}(u,v)$ of $G$ induced by two vertices $u,v \in V(G)$ is defined to be \[ \{w\in V(G):\ d(u,v) = d(u,w) + d(w,v)\ {\rm or}\ d(u,v) = |d(u,w) - d(w,v)|\} .\]
	The line ${\cal L}(u,v)$ is \emph{universal} if it contains every vertex of $G$. More generally, universal lines are of interest in the setting of metric spaces and are connected to the Chen-Chv\'atal Conjecture (see~\cite{ChenChv,rodriguez-2022}).
	
	The article~\cite{rodriguez-2022} gave the following necessary and sufficient condition for the existence of a universal line in a Cartesian product.
	
	\begin{theorem}\label{thm:cp}
		For two graphs $G$ and $H$, the Cartesian product $G \cp H$ contains a universal line if and only if either
		\begin{enumerate}
			\item[(i)] $G$ or $H$ has a maximal general position set consisting of two adjacent vertices, or
			\item[(ii)] $\g(G) = 2$ and $\g(H) = 2$.
		\end{enumerate}
	\end{theorem}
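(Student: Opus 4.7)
The plan is to translate the universal line condition into statements about shortest paths in each factor using the Cartesian product distance formula $d_{G \cp H}((g,h),(g',h')) = d_G(g,g') + d_H(h,h')$. The key preparatory step I would carry out is to expand the membership condition $(g,h) \in \mathcal{L}((u_1,v_1),(u_2,v_2))$ and apply the triangle inequality separately in each coordinate; this reveals that $(g,h)$ must satisfy exactly one of three clean patterns. Writing $M_G, B_G, A_G \subseteq V(G)$ for the sets of vertices $g$ such that, respectively, $g$ lies on a shortest $u_1,u_2$-path, $u_1$ lies on a shortest $g,u_2$-path, or $u_2$ lies on a shortest $u_1,g$-path, and defining $M_H, B_H, A_H$ analogously, the trichotomy becomes: every $(g,h)$ satisfies $g \in M_G$ and $h \in M_H$; or $g \in B_G$ and $h \in B_H$; or $g \in A_G$ and $h \in A_H$.

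For sufficiency, I would first treat (i): if $\{u_1,u_2\}$ is a maximal general position set in $G$ with $u_1 \sim u_2$, then by the characterization of $\gp^-(G) = 2$ from~\cite{SteKlaKriTuiYer} the line $\mathcal{L}_G(u_1,u_2)$ is universal, and since $d_G(u_1, u_2) = 1$, every $g \notin \{u_1,u_2\}$ satisfies $|d_G(u_1,g) - d_G(u_2,g)| = 1$. The $d_H$ contributions cancel in the analogous difference in $G \cp H$, so $\mathcal{L}((u_1,h),(u_2,h))$ is universal for any fixed $h \in V(H)$. For (ii), taking geodetic pairs $\{u_1,u_2\}$ and $\{v_1,v_2\}$, the additivity of shortest-path lengths in each factor immediately places every $(g,h)$ on a shortest $(u_1,v_1),(u_2,v_2)$-path.

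For necessity I would use the trichotomy above. In the degenerate case $v_1 = v_2$ (so $u_1 \neq u_2$) we have $M_H = \{v_1\}$ while $B_H = A_H = V(H)$, so covering every $(g,h)$ with $h \neq v_1$ forces $V(G) = A_G \cup B_G$; since any vertex strictly interior to a shortest $u_1,u_2$-path would lie in $M_G \setminus (A_G \cup B_G)$, this forces $u_1 \sim u_2$ and makes $\{u_1,u_2\}$ a maximal general position set in $G$, yielding (i). The main obstacle is the generic case $u_1 \neq u_2$, $v_1 \neq v_2$. The clean move I would deploy here is to substitute $h = v_1$ and $h = v_2$ separately: since $v_1 \in M_H \cap B_H$ but $v_1 \notin A_H$, and symmetrically $v_2 \in M_H \cap A_H$ but $v_2 \notin B_H$, the trichotomy forces $g \in M_G \cup B_G$ and $g \in M_G \cup A_G$ for every $g \in V(G)$. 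Intersecting and using the elementary identity $A_G \cap B_G = \emptyset$ (valid whenever $u_1 \neq u_2$) gives $V(G) = M_G$, so $\{u_1,u_2\}$ is geodetic in $G$; by symmetry $\{v_1,v_2\}$ is geodetic in $H$, yielding (ii).
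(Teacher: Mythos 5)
The paper offers no proof of Theorem~\ref{thm:cp}: it is quoted from~\cite{rodriguez-2022}, so there is nothing internal to compare your argument against. Taken on its own terms, your proof is correct and essentially complete, and it follows the natural route of expanding the line condition coordinatewise via $d_{G\cp H}=d_G+d_H$. The central reduction is sound: since $d_G(g,u_1)+d_G(g,u_2)\geq d_G(u_1,u_2)$ and $d_G(g,u_1)\leq d_G(u_1,u_2)+d_G(g,u_2)$ (and likewise in $H$), each of the three defining equalities for membership in $\mathcal{L}((u_1,v_1),(u_2,v_2))$ forces the corresponding equality in both coordinates simultaneously, which is exactly your $(M_G,M_H)/(A_G,A_H)/(B_G,B_H)$ trichotomy. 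The two sufficiency cases and the two necessity cases (split according to whether the generating vertices agree in a coordinate) all check out; in particular the observations that $A_G\cap B_G=\emptyset$ when $u_1\neq u_2$ and that an interior vertex of a $u_1,u_2$-geodesic lies in $M_G\setminus(A_G\cup B_G)$ are the right levers, and your identification of ``$\{u_1,u_2\}$ is a maximal general position set'' with ``$\mathcal{L}_G(u_1,u_2)$ is universal'' is legitimate --- it is the statement from~\cite{SteKlaKriTuiYer} recalled at the start of Section~\ref{sec:bounds}, and in the pairwise form you need it follows directly from the definitions, since three vertices lie on a common geodesic exactly when one of them is between the other two. Two small points of hygiene only: the trichotomy is a disjunction rather than an exclusive one (the generating vertices themselves belong to two of the three sets), which is in fact how you use it; and the degenerate case $v_1=v_2$ requires some $h\neq v_1$ to exist, i.e.\ $|V(H)|\geq 2$ --- the theorem is implicitly stated for nontrivial factors, so this is harmless, but it deserves a sentence.
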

	As shown in~\cite{SteKlaKriTuiYer} and~\cite{rodriguez-2022}, any bipartite graph or any graph with a bridge contains two adjacent vertices that form a universal line; hence any Cartesian product $G \cp H$ containing such a factor $G$ or $H$ will satisfy the conditions of Theorem~\ref{thm:cp}, implying that $\gp ^-(G \cp H) = 2$. In particular, if either of the factors of $G \cp H$ is a tree or an even cycle, the product will contain a universal line. Hence for many of the families of Cartesian product graphs discussed in the preceding section the lower general position number follows easily from known results; hence for interesting problems we will have to look to more complicated families. We will treat the case of an odd cycle factor in Corollary~\ref{cor:odd cycle}.
	
	In the case i) of Theorem~\ref{thm:cp} that either $G$ or $H$ has a maximal general position set consisting of a pair of adjacent vertices, note that $G \cp H$ has a maximal general position set of order two contained within a single layer. This suggests the following question: can we always find a small maximal general position set of $G \cp H$ within a single $G$- or $H$-layer? A subset $S$ of a $G$-layer $G^h$ will be in general position in $G \cp H$ only if $S$ is a general position set of $G$, so any maximal general position set of $G \cp H$ belonging to a single $G$-layer must be a maximal general position set of $G$. However, that $S$ is a maximal general position set of $G$ does not necessarily imply that $S \times \{ h\} $ is a maximal general position set in the product $G \cp H$. We now present the extra property that a maximal general position set $S$ of $G$ must satisfy in order for a copy $S \times \{h\} $ of $S$ in a $G$-layer $G^h$ to be a maximal general position set of $G \cp H$.

	\begin{definition}
		A \emph{terminal set} of a graph $G$ is a maximal general position set $S$ such that for any vertex $u \in V(G) - S$ there is a shortest path of $G$ that contains $u$ as an endpoint as well as at least two vertices of $S$. The number of vertices in a largest terminal set will be denoted by $\tp (G)$ and the number of vertices in a smallest terminal set by $\tp ^- (G)$. If there is no such set, then we write $\tp ^- (G) = \tp (G) = \infty $.
	\end{definition}
	An example of a terminal position set is shown in Figure~\ref{fig:Petersen}. As a terminal set is a maximal general position set, we have $\tp (G) \geq \tp ^-(G) \geq \gp ^-(G)$ for any graph $G$. Hence for any complete graph $\tp (K_n) = \tp ^-(K_n) = n$. If $S$ is a (not necessarily maximal) general position set of $G$, then we will say that a vertex $u \in V(G)-S$ is \emph{terminal with respect to $S$}, or \emph{$S$-terminal} for short, if there is a shortest path with $u$ as an endpoint that contains two vertices of $S$; we will refer to such a path as a \emph{$(u,S)$-bad path}.
	
	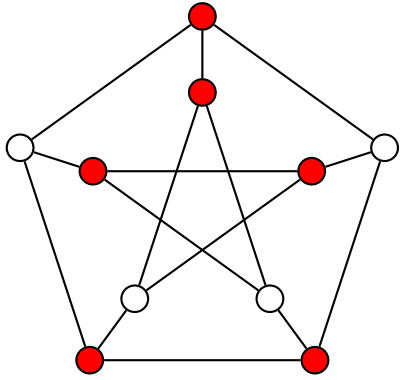
\begin{figure}[ht!]
		\centering
		\begin{tikzpicture}[x=0.2mm,y=-0.2mm,inner sep=0.2mm,scale=0.7,thick,vertex/.style={circle,draw,minimum size=10}]
			\node at (180,200) [vertex,fill=red] (v1) {};
			\node at (8.8,324.4) [vertex] (v2) {};
			\node at (74.2,525.6) [vertex,fill=red] (v3) {};
			\node at (285.8,525.6) [vertex,fill=red] (v4) {};
			\node at (351.2,324.4) [vertex] (v5) {};
			\node at (180,272) [vertex,fill=red] (v6) {};
			\node at (116.5,467.4) [vertex] (v7) {};
			\node at (282.7,346.6) [vertex,fill=red] (v8) {};
			\node at (77.3,346.6) [vertex,fill=red] (v9) {};
			\node at (243.5,467.4) [vertex] (v10) {};

			\path
			(v1) edge (v2)
			(v1) edge (v5)
			(v2) edge (v3)
			(v3) edge (v4)
			(v4) edge (v5)
			
			(v6) edge (v7)
			(v6) edge (v10)
			(v7) edge (v8)
			(v8) edge (v9)
			(v9) edge (v10)
			
			(v1) edge (v6)
			(v2) edge (v9)
			(v3) edge (v7)
			(v4) edge (v10)
			(v5) edge (v8)

			;
		\end{tikzpicture}
		\caption{The Petersen graph with a terminal position set (red). Any vertex outside the set of red vertices is the endpoint of a shortest path containing two red vertices.}
		\label{fig:Petersen}
	\end{figure}

	\begin{lemma}\label{lem:terminal upper bound}
		Let $G$ and $H$ be graphs with order at least two. If $S$ is a maximal general position set of $G$ and $h \in V(H)$, then $S \times \{ h\} $ is a maximal general position set of $G \cp H$ if and only if $S$ is a terminal position set of $G$ (with a similar result for the factor $H$). Hence, for any Cartesian product 
		\[ \gp ^-(G \cp H) \leq \min \{ \tp ^- (G),\tp ^- (H)\} .\]
	\end{lemma}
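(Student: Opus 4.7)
The plan is to reduce both sides of the equivalence to statements about distances, using the Cartesian product distance formula $d_{G \cp H}((x,y),(x',y')) = d_G(x,x') + d_H(y,y')$. The key preliminary observation is that any shortest path between two vertices of a common $G$-layer $G^h$ lies entirely inside that layer, so three vertices of $G^h$ are collinear (lie on a common shortest path) in $G \cp H$ if and only if their $G$-projections are collinear in $G$. In particular $S \times \{h\}$ is in general position in $G \cp H$ if and only if $S$ is in general position in $G$, which is given; so the lemma reduces to comparing maximality of $S \times \{h\}$ in $G \cp H$ with the terminal property of $S$ in $G$.

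For the \emph{if} direction, I will assume $S$ is terminal in $G$ and take an arbitrary $(u,k) \notin S \times \{h\}$, aiming to find two vertices of $S \times \{h\}$ collinear with $(u,k)$ in $G \cp H$. The argument splits into three cases: (a) $k=h$ and $u \notin S$, where the terminal property directly supplies $s_1,s_2 \in S$ with $s_1$ on a shortest $u$-$s_2$ path in $G$, and this lifts to the layer $G^h$; (b) $k \neq h$ and $u \in S$, where $(u,k)$, $(u,h)$, and any other $(s,h) \in S \times \{h\}$ are automatically collinear, noting that $|S| \geq 2$ holds by maximality together with $|V(G)| \geq 2$; and (c) $k \neq h$ and $u \notin S$, where the terminal property again gives $s_1,s_2$, and adding the common summand $d_H(k,h)$ to both $d_G(u,s_1)$ and $d_G(u,s_2)$ preserves the collinearity identity in $G \cp H$.

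The \emph{only if} direction is the more delicate part and the main obstacle. A naive argument would take $u \notin S$, examine $(u,h)$, and use maximality of $S \times \{h\}$ to force three collinear vertices; but the resulting triple could have $(u,h)$ between $(s_1,h)$ and $(s_2,h)$, which only tells us that $u$ lies on a shortest $s_1$-$s_2$ path in $G$, not that some $s_i$ lies on a shortest path with $u$ as an endpoint. To bypass this I instead apply maximality to $(u,k)$ for some $k \in V(H) - \{h\}$, available because $|V(H)| \geq 2$. Since $S \times \{h\}$ is in general position, the three collinear vertices of $S \times \{h\} \cup \{(u,k)\}$ must include $(u,k)$, and the analogous distance computation shows $(u,k)$ cannot be the middle vertex of the triple, because that would force $2d_H(k,h) = 0$. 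Hence one of $(s_1,h), (s_2,h)$ must lie strictly between $(u,k)$ and the other, and stripping off the $H$-coordinate yields precisely the terminal condition for $u$ in $G$.

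Finally, the stated inequality $\gp^-(G \cp H) \leq \min\{\tp^-(G),\tp^-(H)\}$ is an immediate consequence: any smallest terminal set $S$ of $G$ produces, via any fixed $h \in V(H)$, a maximal general position set $S \times \{h\}$ of $G \cp H$ of cardinality $\tp^-(G)$, and a symmetric construction handles $H$.
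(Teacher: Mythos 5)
Your proposal is correct and follows essentially the same route as the paper: the \emph{if} direction lifts the terminal property to the product via the distance formula $d_{G \cp H}((x,y),(x',y')) = d_G(x,x') + d_H(y,y')$ (the paper phrases this as an explicit concatenated path $_{u'}P,Q_v$), and the \emph{only if} direction exploits a vertex $(u,k)$ with $k \neq h$ in a different $H$-layer, observing that such a vertex cannot be the midpoint of a line through two vertices of $S \times \{h\}$, which is exactly the paper's (contrapositively stated) argument. Your write-up is somewhat more explicit about the case analysis and the midpoint exclusion, but the underlying ideas coincide.
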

	\begin{proof}
		Firstly, suppose that $S$ is a maximal general position set of $G$, but that there is a vertex $w \in V(G)-S$ such that there is no $(w,S)$-bad path. Then for any vertex $h \in V(H)$ the set $S \times \{ h\} $ is not a maximal general position set of $G \cp H$, as for any $h' \in V(H) -\{ h\} $ the vertex $(w,h')$ can be added to $S \times \{ h\} $ without violating the no-three-in-line property. 
		
		Conversely, let $S$ be a terminal set in $G$ and consider the set $S^{\prime } = S \times \{ v\} $ for any $v \in V(H)$. By definition, $S$ is a maximal general position set of $G$, so $S^{\prime }$ is in general position in $G \cp H$ and adding a further vertex from the layer $G^v $ would create three-in-a-line. Suppose that we add a vertex $(u^{\prime },v^{\prime })$ to $S^{\prime }$, where $v^{\prime } \neq v$. Let $P$ be a shortest $v,v^{\prime }$-path in $H$. If $u^{\prime } \not \in S$, then let $Q$ be a $(u^{\prime },S)$-bad path, i.e. $Q$ is a shortest path with $u^{\prime }$ as an endpoint and containing two vertices of $S$; otherwise, if $u^{\prime } \in S$, $Q$ can be any shortest path from $u^{\prime }$ to a vertex of $S$. Now the concatenated path $_{u^{\prime}}P,Q_v$ is a shortest path in $G \cp H$ that contains three vertices of $S^{\prime } \cup \{ (u^{\prime },v^{\prime })\} $. Thus $S^{\prime }$ is a maximal general position set of $G \cp H$ and $\gp ^-(G \cp H) \leq \tp ^- (G)$ and similarly $\gp ^-(G \cp H) \leq \tp ^- (H)$.
	\end{proof}
	This allows us to immediately settle the question of the lower general position problem when one of the factors is a cycle or a wheel. 
	\begin{corollary}\label{cor:odd cycle}
		For any graph $H$ and any odd $r$, $\gp ^-(C_r \cp H) = 3$, unless $H$ has a universal line consisting of two adjacent vertices.
	\end{corollary}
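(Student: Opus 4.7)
The strategy is to sandwich $\gp^-(C_r \cp H)$ between $3$ and $3$ by combining Lemma~\ref{lem:terminal upper bound} with Theorem~\ref{thm:cp}.

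For the upper bound, the plan is to show $\tp^-(C_r) \leq 3$ for every odd $r \geq 3$ by exhibiting an explicit terminal set of size three; Lemma~\ref{lem:terminal upper bound} will then deliver $\gp^-(C_r \cp H) \leq \tp^-(C_r) \leq 3$. Writing $r = 2k+1$ and labelling the vertices of $C_r$ cyclically as $0, 1, \ldots, 2k$, the natural candidate is $S = \{0, 1, k+1\}$. I would verify in turn: (a) general position, by inspection of the three pairwise geodesics, which are unique (for $k \geq 2$) and avoid the third vertex of $S$; (b) maximality, since any $j \in \{2, \ldots, k\}$ lies on the unique shortest $(1, k+1)$-path while any $j \in \{k+2, \ldots, 2k\}$ lies on the unique shortest $(0, k+1)$-path, so adding any such $j$ produces three-in-a-line; (c) terminality, by exhibiting for every $j \not\in S$ a shortest path ending at $j$ that contains two members of $S$, namely $(j, j-1, \ldots, 1, 0)$ when $2 \leq j \leq k$ and $(j, j+1, \ldots, 2k, 0, 1)$ when $k+2 \leq j \leq 2k$, each of length equal to the corresponding cycle distance. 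For $r = 3$ the entire vertex set of $C_3 \cong K_3$ is already a terminal set of size three, so the bound holds in this degenerate case as well.

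For the lower bound, I would assume that $H$ has no universal line consisting of two adjacent vertices and deduce that $C_r \cp H$ contains no universal line at all; by the characterisation $\gp^-(X) = 2$ if and only if $X$ has a universal line from~\cite{SteKlaKriTuiYer}, this forces $\gp^-(C_r \cp H) \geq 3$. Theorem~\ref{thm:cp} reduces the task to ruling out its two alternatives from the $C_r$ side. Condition~(i) on $C_r$ fails: if $u \sim v$ in $C_r$ with $r$ odd, then the vertex of $C_r$ equidistant from $u$ and $v$ (the third vertex when $r = 3$, or an almost-antipodal vertex for $r \geq 5$) fails $d(u,v) = |d(u,w) - d(w,v)|$ and is therefore not in $\mathcal{L}(u,v)$, so no adjacent pair is a universal line. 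Condition~(ii) fails because $\g(C_r) \geq 3$ for odd $r$: any two vertices of an odd cycle are joined by a unique geodesic, which covers only one of the two arcs between them and omits at least one vertex. The hypothesis discards condition~(i) on the $H$ side as well, so no universal line exists in the product.

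The main obstacle is the verification in step~(c): one must check that the displayed paths are genuinely shortest for every value of $j$, and that adding an outside vertex $j$ to $S$ really does create a three-in-a-line rather than a near-miss. Everything else, in particular the geodetic-number computation for odd cycles and the non-universality of edges in $\mathcal{L}(u,v)$, is standard and follows from the explicit distance formula $d_{C_r}(i, j) = \min(|i-j|, r - |i-j|)$.
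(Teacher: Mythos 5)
Your proposal is correct and follows essentially the same route as the paper: the upper bound comes from the terminal set consisting of an adjacent pair together with the antipodal vertex (your $\{0,1,k+1\}$) fed into Lemma~\ref{lem:terminal upper bound}, and the lower bound from ruling out both conditions of Theorem~\ref{thm:cp}. You simply supply the verifications (general position, maximality, terminality, and $\g(C_r)\geq 3$) that the paper leaves implicit.
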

	\begin{proof}
		By Theorem~\ref{thm:cp} the product $C_r \cp H$ will not have a universal line unless $H$ has a maximal general position set consisting of two adjacent vertices, so in all other cases $\gp ^-(C_r \cp H) \geq 3$. However a pair of adjacent vertices in $C_r$, together with the corresponding antipodal vertex, forms a terminal set in $C_r$, so by Lemma~\ref{lem:terminal upper bound} we obtain the upper bound $\gp ^-(C_r \cp H) \leq 3$, establishing the result.
	\end{proof}
	Products with one factor a wheel graph can be dealt with in a similar fashion, since $\g (W_n) > 2$ for $n \geq 6$. In this case any triangle is a terminal set.
	\begin{corollary}
		For $n \geq 6$, $\gp ^-(W_n \cp H) = 3$, unless $H$ has a maximal general position set consisting of two adjacent vertices.
	\end{corollary}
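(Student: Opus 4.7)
The plan is to adapt the proof of Corollary~\ref{cor:odd cycle} with the odd cycle replaced by $W_n$. To obtain the lower bound $\gp^-(W_n \cp H) \geq 3$ from Theorem~\ref{thm:cp}, one must verify that both conditions of that theorem fail. Condition~(i) is excluded by the hypothesis on $H$ together with the easy observation that $W_n$ itself has no maximal general position set of two adjacent vertices: any such pair extends by adding the hub (if the pair is rim--rim) or by adding a rim-neighbour of the rim vertex (if the pair is hub--rim) to form a $K_3$, which remains in general position.

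For condition~(ii) we need $\g(W_n) \geq 3$. Since $\diam(W_n) = 2$, a geodetic pair $\{x,y\}$ would need $d(x,y) = 2$ with every other vertex lying on a shortest $x,y$-path. The hub $c$ is adjacent to every rim vertex, so $c \notin \{x,y\}$; hence $x,y$ are non-adjacent rim vertices, and the vertices lying on a shortest $x,y$-path form a subset of $\{x, y, c\}$ together with at most one common rim neighbour of $x$ and $y$ (using $n - 1 \geq 5$). Since $|V(W_n)| = n \geq 6$, this is a proper subset of $V(W_n)$, so no such pair is geodetic and $\g(W_n) \geq 3$. Combined with the previous paragraph, Theorem~\ref{thm:cp} gives the lower bound.

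For the upper bound we exhibit a terminal set of order three. Any triangle $T = \{c, u_1, u_2\}$ consisting of the hub $c$ and two rim-adjacent vertices $u_1, u_2$ should work: it is in general position since the induced subgraph on $T$ is $K_3$. To show $T$ is terminal, let $v \in V(W_n) \setminus T$, which is necessarily a rim vertex distinct from $u_1$ and $u_2$. Because $n - 1 \geq 5$, the vertex $v$ cannot be rim-adjacent to both $u_1$ and $u_2$; choose $u_i$ with $d(v, u_i) = 2$. The path $v, c, u_i$ is then a shortest $v, u_i$-path with $v$ as endpoint containing two elements of $T$, so $v$ is $T$-terminal. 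This gives $\tp^-(W_n) \leq 3$, and Lemma~\ref{lem:terminal upper bound} delivers $\gp^-(W_n \cp H) \leq 3$, matching the lower bound. The only step requiring any real case analysis is the verification that $\g(W_n) \geq 3$, which is routine given the diameter-two structure of $W_n$; the rest follows from the same template as the odd-cycle case, so no serious obstacle is anticipated.
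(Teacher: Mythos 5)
Your proposal is correct and follows the same route as the paper: the lower bound comes from Theorem~\ref{thm:cp} (checking that $W_n$ has no maximal general position set of two adjacent vertices and that $\g(W_n)>2$ for $n\geq 6$), and the upper bound comes from Lemma~\ref{lem:terminal upper bound} applied to a triangle, which the paper likewise asserts is a terminal set of $W_n$. You have merely filled in the routine verifications that the paper leaves implicit.
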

	We now show that Lemma~\ref{lem:terminal upper bound} is tight for products $K_{n_1} \cp K_{n_2} \cp \cdots \cp K_{n_k}$ of an arbitrary number of complete graphs (of which a rook graph $K_{n_1} \cp K_{n_2}$ is a particular instance). In the article~\cite{Ghorbani-2021} it was shown that $\gp (K_{n_1} \cp K_{n_2} \cp \cdots \cp K_{n_k}) \geq n_1+n_2+\dots +n_k-k$. 
	
	\begin{theorem}
		The lower general position number of the product of complete graphs is given by \[ \gp ^-(K_{n_1} \cp K_{n_2} \cp \cdots \cp K_{n_k}) = \min \{ n_1,n_2,\dots ,n_k\} .\]
	\end{theorem}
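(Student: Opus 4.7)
The plan is to establish matching upper and lower bounds. For the upper bound I would apply Lemma~\ref{lem:terminal upper bound}. Take $n_1 = \min_i n_i$ without loss of generality; since every two vertices of $K_{n_1}$ are adjacent, no shortest path of $K_{n_1}$ contains three vertices, so every subset of $V(K_{n_1})$ is in general position and the only maximal general position set is $V(K_{n_1})$ itself. This set is vacuously terminal (there are no vertices outside it to check), giving $\tp^-(K_{n_1}) = n_1$. Viewing the product via associativity as $K_{n_1} \cp (K_{n_2} \cp \cdots \cp K_{n_k})$, Lemma~\ref{lem:terminal upper bound} immediately yields $\gp^-(K_{n_1} \cp \cdots \cp K_{n_k}) \leq \tp^-(K_{n_1}) = \min_i n_i$.

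For the lower bound I would pass to the Hamming model: identify $V(G)$ with $[n_1] \times \cdots \times [n_k]$, so that $d(x,y)$ counts the coordinates in which $x$ and $y$ differ, and observe that three vertices are collinear precisely when one lies coordinatewise ``between'' the other two (its $i$th coordinate belonging to $\{x_i, y_i\}$ at every coordinate $i$, and agreeing with the common value wherever $x_i = y_i$). Suppose for contradiction that $S$ is a maximal general position set with $|S| < m := \min_i n_i$. Writing $\pi_i$ for the $i$th projection, we have $|\pi_i(S)| \leq |S| < n_i$, so I may choose $a_i \in [n_i] \setminus \pi_i(S)$ and set $u = (a_1, \ldots, a_k)$. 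By construction $u_i \neq s_i$ for every $s \in S$ and every coordinate $i$; in particular $u \notin S$. The claim is then that $S \cup \{u\}$ is still in general position, contradicting maximality and forcing $|S| \geq m$.

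To verify the claim, one checks the three ways $u$ could be collinear with a pair of distinct vertices $s, t \in S$. If $u$ lay between $s$ and $t$, then at any coordinate $i$ with $s_i \neq t_i$ we would need $u_i \in \{s_i, t_i\}$, contradicting $u_i \notin \pi_i(S)$. If $s$ lay between $u$ and $t$, we would need $s_i \in \{u_i, t_i\}$ at every coordinate, and since $s_i \neq u_i$ this forces $s_i = t_i$ for all $i$, whence $s = t$. The case where $t$ lies between $u$ and $s$ is symmetric. I do not anticipate any real obstacle here: the argument is essentially a direct unpacking of the Hamming metric, and the only mild subtlety is keeping track of which of the three collinear vertices plays the role of the ``middle'' one, so that all three configurations are defeated by the coordinate-freeness of $u$.
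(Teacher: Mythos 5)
Your proposal is correct and follows essentially the same route as the paper: the upper bound via Lemma~\ref{lem:terminal upper bound} applied to the smallest complete factor (whose vertex set is trivially a terminal set), and the lower bound by picking a vertex that avoids every coordinate used by $S$ and showing it can be added. The only cosmetic difference is that the paper closes the lower bound by observing that this vertex is at distance $k=\diam$ from all of $S$, which rules out all three collinearity configurations at once, whereas you carry out the equivalent coordinatewise betweenness check explicitly.
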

	\begin{proof}
		Suppose without loss of generality that $\min \{ n_1,n_2,\dots ,n_k\} = n_k$. By Lemma~\ref{lem:terminal upper bound} it follows that \[ \gp ^-(K_{n_1} \cp K_{n_2} \cp \cdots \cp K_{n_k}) \leq n_k .\] 
		Suppose that $S$ is a general position set of $K_{n_1} \cp K_{n_2} \cp \cdots \cp K_{n_k}$ with order strictly less than $n_k$. As $|S| < n_k$, there exists a vertex $(u_1,\dots ,u_k)$ such that for any $(v_1,\dots ,v_k) \in S$ we have $v_i \neq u_i$ for $1 \leq i \leq k$. Therefore the distance of $(u_1,\dots ,u_k)$ from every vertex of $S$ is $k = \diam (K_{n_1} \cp K_{n_2} \cp \cdots \cp K_{n_k})$. Thus the vertex $(u_1,u_2,\dots ,u_k)$ could be added to $S$ whilst maintaining the no-three-in-line property, so that $S$ is not maximal.
	\end{proof}
	The general position sets of products of a general graph with a clique suggests the notion of `orthogonal general position sets', which may be of independent interest.
	\begin{definition}\label{def:}
		Two (not necessarily disjoint) general position sets $S_1$ and $S_2$ are \emph{orthogonal} if any shortest path starting in $S_1$ and ending in $S_2$ contains just two vertices of the multiset $S_1 \cup S_2$.
	\end{definition}
	Note that if there is a vertex in $S_1 \cap S_2$, then this would be counted twice if it occurs in a shortest path. Hence if $S_1 \cap S_2 \neq \emptyset $, then at least one of $S_1,S_2$ must contain just one vertex. A couple of examples of orthogonal general position sets are shown in Figure~\ref{fig:orthogonal gp sets}.
	
	\begin{figure}
		\centering
		\begin{tikzpicture}[middlearrow=stealth,x=0.2mm,y=-0.2mm,inner sep=0.1mm,scale=1.0,
			thick,vertex/.style={circle,draw,minimum size=15,font=\small},every label/.style={font=\scriptsize}]

			\node at (0,0) [vertex,fill=red] (x0) {};
			\node at (-50,86.6) [vertex,fill=blue] (x1) {};
			\node at (-150,86.6) [vertex] (x2) {};
			\node at (-200,0) [vertex,fill=red] (x3) {};
			\node at (-150,-86.6) [vertex] (x4) {};
			\node at (-50,-86.6) [vertex,fill=blue] (x5) {};
			
			\node at (200,0) [vertex,fill=blue] (y) {};
			\node at (100,0) [vertex,fill=red] (y1) {};
			\node at (200,100) [vertex,fill=red] (y2) {};
			\node at (300,0) [vertex,fill=red] (y3) {};
			\node at (200,-100) [vertex,fill=red] (y4) {};
			
			\path
			
			(y) edge (y1)
			(y) edge (y2)
			(y) edge (y3)
			(y) edge (y4)
			
			(x0) edge (x1)
			(x1) edge (x2)
			(x2) edge (x3)
			(x3) edge (x4)
			(x4) edge (x5)
			(x5) edge (x0)

			;
		\end{tikzpicture}
		\caption{Examples of orthogonal general position sets in a cycle and a star (one set in red and the other in blue).}
		\label{fig:orthogonal gp sets}
	\end{figure}
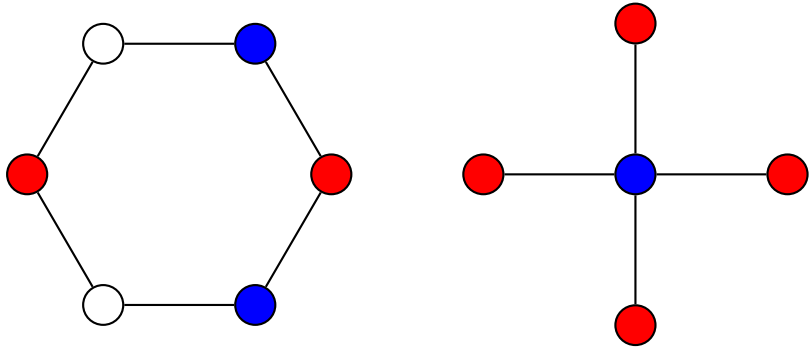

	It is easily seen that there is a one-to-one correspondence between maximal collections of $r$ orthogonal general position sets of $G$ and the $r$ layers of maximal general position sets in $G \cp K_r$; we omit the simple proof.
	\begin{lemma}
		A subset $S \subseteq V(G \cp K_r)$ is in general position if and only if the projections onto $G$ of the intersections of $S$ with the $G$-layers form a collection of orthogonal general position sets of $G$.
	\end{lemma}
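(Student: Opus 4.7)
The plan is to apply the distance formula $d_{G \cp K_r}((u,h),(v,h')) = d_G(u,v) + d_{K_r}(h,h')$, using that distances in $K_r$ lie in $\{0,1\}$ and that any shortest path in $G \cp K_r$ between two distinct layers $h_i, h_j$ traces a shortest $G$-path with a single $K_r$-edge inserted at a chosen position. I would write $S_i$ for the projection $\pi_1(S \cap G^{h_i})$, so that $S \cap G^{h_i} = S_i \times \{h_i\}$.

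For the forward direction, assume $S$ is a general position set of $G \cp K_r$. Three collinear vertices of $S_i$ lift to three collinear vertices of $S$ inside $G^{h_i}$, so each $S_i$ is a general position set of $G$. For $i \neq j$, suppose some shortest $u,v$-path $P$ in $G$ with $u \in S_i$ and $v \in S_j$ carries at least three elements of the multiset $S_i \cup S_j$. After discarding the degenerate count, the extra multiset element is either an intermediate vertex $w$ of $P$ lying in $S_i$ or $S_j$, or an endpoint lying in $S_i \cap S_j$. In each situation I exhibit a shortest path in $G \cp K_r$ through three $S$-vertices by jumping layers at a suitable point of $P$: for instance, $w \in S_i$ gives the $S$-vertices $(u,h_i), (w,h_i), (v,h_j)$ on a path that jumps at $v$, while $u \in S_i \cap S_j$ gives $(u,h_i), (u,h_j), (v,h_j)$ on a path that jumps at $u$. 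Any such lift contradicts general position, so the pair $S_i, S_j$ is orthogonal.

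For the reverse direction, assume the $S_i$ are pairwise orthogonal general position sets of $G$, and let $A = (u_1, h_{i_1}), B = (u_2, h_{i_2}), C = (u_3, h_{i_3}) \in S$ lie on a common shortest path with $B$ between $A$ and $C$. The identity $d(A,C) = d(A,B) + d(B,C)$ separates, by the distance formula, into $d_G(u_1, u_3) = d_G(u_1, u_2) + d_G(u_2, u_3)$ and $d_{K_r}(h_{i_1}, h_{i_3}) = d_{K_r}(h_{i_1}, h_{i_2}) + d_{K_r}(h_{i_2}, h_{i_3})$; since $K_r$-distances lie in $\{0,1\}$, the second equality forces $h_{i_2} \in \{h_{i_1}, h_{i_3}\}$. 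If all three layers coincide, then $u_1, u_2, u_3$ are distinct vertices of some $S_i$ on a common shortest $G$-path, contradicting that $S_i$ is a general position set. Otherwise exactly two layers $h_i \neq h_j$ appear, with $B$ sharing its layer with one endpoint, and we obtain a shortest $G$-path from $S_i$ to $S_j$ meeting the multiset $S_i \cup S_j$ at least three times, contradicting orthogonality.

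The step I expect to demand the most care is the multiset bookkeeping in the final argument, in particular when the middle vertex shares its $G$-coordinate with one of the ambient endpoints. That coincidence forces a vertex of $S_i \cap S_j$, and it is precisely the situation that the "counted twice" convention in the orthogonality definition is designed to handle, so the count of three survives and the contradiction still goes through.
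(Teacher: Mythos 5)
Your proof is correct and complete; the paper itself omits the proof of this lemma as ``simple'', and your argument via the distance formula $d_{G \cp K_r}((u,h),(v,h')) = d_G(u,v) + d_{K_r}(h,h')$, splitting the collinearity condition into its $G$- and $K_r$-components and lifting bad configurations by inserting a single layer-jump, is exactly the intended routine verification. In particular you correctly handle the one genuinely delicate point, namely that a vertex of $S_i \cap S_j$ occurring on a shortest path must be counted twice in the multiset, which is what the paper's remark after Definition~2.8 is flagging.
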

	
	In general however, the lower general position number of a product $G \cp H$ and the bound of Lemma~\ref{lem:terminal upper bound} can be arbitrarily far apart.
	
	\begin{proposition}
		For any $r \geq 2$, there exist graphs $G$ and $H$ such that $G \cp H$ has a universal line and $\min \{ \tp ^- (G) ,\tp ^- (H)\} = r$.
	\end{proposition}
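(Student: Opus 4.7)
The plan is to take $G = H = K_{r+1} - e$, where $e = uv$ is a single non-edge; equivalently, $G$ is the join of the independent set $\{u, v\}$ with a clique of order $r-1$. First I would verify that $\g(G) = 2$: the vertices $u$ and $v$ are non-adjacent but at distance $2$, realised by $u - w - v$ for any $w$ in the middle clique, so every vertex of $G$ lies on a shortest $u,v$-geodesic, making $\{u, v\}$ a geodetic set. By Theorem~\ref{thm:cp}(ii) this immediately gives a universal line in $G \cp H$.

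Next I would compute $\tp^-(G)$ by classifying the maximal general position sets. Since the only non-edge of $G$ is $uv$, every shortest path of $G$ has length at most $2$, and the only length-$2$ shortest paths are of the form $u - w - v$ with $w$ in the clique. I would then argue that any maximal general position set $S$ falls into one of two cases. If $\{u, v\} \subseteq S$, then $S$ can contain no clique vertex $w$ (otherwise $u,w,v$ is three-in-a-line), so $S = \{u, v\}$. If $S$ contains at most one of $\{u, v\}$, then all pairs of vertices in $S$ are adjacent, so $S$ is trivially in general position; maximality then forces $S$ to be either $V(G) \setminus \{v\}$ or $V(G) \setminus \{u\}$.

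To conclude, I would check terminality of these three sets. The pair $\{u, v\}$ is \emph{not} terminal: every clique vertex $w$ satisfies $d(w, u) = d(w, v) = 1$, so no shortest path from $w$ can contain both $u$ and $v$. On the other hand, $V(G) \setminus \{v\}$ is a terminal set of size $r$: the only outside vertex is $v$, and for any clique vertex $w$ the path $v - w - u$ is a shortest $v,u$-path containing two members of the set (symmetrically for $V(G) \setminus \{u\}$). Hence $\tp^-(G) = r$; by symmetry $\tp^-(H) = r$, and the required minimum is achieved. The only real obstacle is the classification of maximal general position sets, but this is routine given how few distinct distances $G$ realises.
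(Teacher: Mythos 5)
Your proposal is correct and follows essentially the same route as the paper: both take $G = H = K_{r+1}^-$, invoke condition (ii) of Theorem~\ref{thm:cp} via $\g(K_{r+1}^-)=2$, and observe that the smallest terminal sets have order $r$. You simply supply the details (the classification of the three maximal general position sets and the terminality check) that the paper leaves as ``easily seen.''
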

	\begin{proof}
		For a given $r \geq 2$, take $G$ and $H$ to be $K_{r+1}^-$, i.e. a complete graph $K_{r+1}$ with one edge deleted. If the deleted edge is $uv$, then $\{ u,v\} $ induces a universal line in $K_{r+1}^-$; hence by condition ii) of Theorem~\ref{thm:cp} we know that $\gp ^-(K_{r+1}^- \cp K_{r+1}^-) = 2$, whilst it is easily seen that the smallest terminal sets in $K_{r+1}^-$ have order $r$.  
	\end{proof}
	
	We close with a conjectured lower bound for $\gp ^-(G \cp H)$; this has been verified computationally for all pairs $G,H$, where $G$ and $H$ both have order at most six~\cite{Erskine}.

	\begin{conjecture}
		For any graphs $G$ and $H$, \[ \gp ^-(G \cp H) \geq \min \{ \gp ^-(G),\gp ^-(H)\} .\]
	\end{conjecture}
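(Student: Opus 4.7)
My plan is to argue by contradiction. Suppose $S$ is a maximal general position set of $G \cp H$ with $|S| < m := \min\{\gp^-(G),\gp^-(H)\}$, and I try to exhibit a vertex that can be added to $S$ without creating three-in-a-line, contradicting maximality.

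First, I handle an easy reduction. If $\pi_1(S) = V(G)$, then $|S| \geq |\pi_1(S)| = |V(G)| \geq \gp^-(G) \geq m$, a contradiction; so we may assume $\pi_1(S) \subsetneq V(G)$ and symmetrically $\pi_2(S) \subsetneq V(H)$. In particular there is some $g^* \in V(G) \setminus \pi_1(S)$ such that the whole $H$-layer $^{g^*}H$ misses $S$, and similarly a $G$-layer $G^{h^*}$ disjoint from $S$; by maximality, every vertex of each of these two layers must be blocked by a shortest path meeting $S$ in at least two vertices.

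Next I exploit layer-wise non-maximality. For each $h \in V(H)$, set $T^h = \pi_1(S \cap G^h)$; this is a general position set of $G$ (since $G^h$ is isometric to $G$) with $|T^h| \leq |S| < \gp^-(G)$, hence non-maximal, so it admits an extension vertex $v_h \in V(G) \setminus T^h$ with $T^h \cup \{v_h\}$ still in general position in $G$. A symmetric construction gives extensions $w_g$ on the $H$ side. The candidate $(v_h,h)$ is not in $S$ and cannot be blocked inside the layer $G^h$, so any blocking shortest path in $G \cp H$ must recruit at least one vertex of $S$ lying outside $G^h$. Using the shuffle characterisation of shortest paths in a Cartesian product together with the distance formula $d_{G \cp H}((u,v),(u',v')) = d_G(u,u')+d_H(v,v')$, such a blocking path translates into a tightly constrained pair of geodesic relations among the projections.

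The heart of the argument is to choose $h$ and $v_h$ so that no such blocking configuration can exist, yielding the contradiction. My intended strategy is to pick $h$ maximising $|T^h|$, together with a careful choice of $v_h$, and to rule out blocking paths by a case analysis on where the two $S$-vertices of the putative blocking geodesic lie relative to $G^h$: each case either forces a three-in-a-line among projections (violating general position of the extended set $T^h \cup \{v_h\}$ in $G$, or of some $H$-layer projection) or forces several elements of $S$ to share coordinates in a rigid pattern, which a counting argument should rule out since $|S| < m$. The main obstacle I anticipate is this final counting step: a single vertex of $S$ can simultaneously block many potential extensions across many layers, so one must track these interactions delicately, and the freedom in choosing $v_h$ for each $h$ must be exploited globally rather than layer by layer. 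This entanglement between layers through the product metric is presumably why the conjecture has so far only been verified by exhaustive computation on small graphs.
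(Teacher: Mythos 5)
This statement is presented in the paper as a \emph{conjecture}: the authors offer no proof, only computational verification for all pairs of graphs of order at most six, so there is no proof in the paper to compare yours against. What you have written is likewise not a proof but a strategy outline, and its central step is missing. Your reductions are fine as far as they go: the projections $\pi_1(S)$, $\pi_2(S)$ are proper subsets, each layer trace $T^h=\pi_1(S\cap G^h)$ has $|T^h|\le |S|<\gp^-(G)$ and hence is a non-maximal general position set of $G$, so an in-layer extension vertex $v_h$ exists and $(v_h,h)$ cannot be blocked by a geodesic lying entirely in $G^h$. But the conclusion you need --- that for \emph{some} choice of $h$ and $v_h$ the vertex $(v_h,h)$ is blocked by no geodesic of $G\cp H$ at all --- is exactly the point at which you stop and say ``a case analysis ... should rule out'' the remaining configurations. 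That case analysis is the whole difficulty, and you acknowledge yourself that you do not know how to carry out the counting step.

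Concretely, the gap is this: a blocking geodesic through $(v_h,h)$ may contain two vertices of $S$ lying in two layers both different from $G^h$ (and from each other). Projecting such a geodesic to $G$ yields three vertices on a $G$-geodesic, but two or even all three of the projections may coincide, so no violation of the general position of $T^h\cup\{v_h\}$ in $G$ need arise; projecting to $H$ gives three vertices on an $H$-geodesic that need not involve any set you have controlled. So neither projection produces a contradiction, and the ``rigid pattern'' you hope to rule out by counting is not identified, let alone bounded against $|S|<m$ (note $m$ can be as small as $2$ or $3$, so there is very little room for a counting argument). A single vertex of $S$ really can block extension vertices in many layers simultaneously, which is why a layer-by-layer choice of $v_h$ does not suffice and why the authors leave the statement as a conjecture. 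As it stands, your proposal does not establish the result.
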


	\section{Existence of terminal sets}\label{sec:existence}
	
	The bound of Lemma~\ref{lem:terminal upper bound} raises the following important question: does every graph have a terminal set? This question turns out to be quite deep. In the following two theorems we give a constructive proof of the existence of a terminal set for any graph with diameter at most three.

	\begin{theorem}\label{thm:diam two}
		Every graph with diameter two has a terminal set.
	\end{theorem}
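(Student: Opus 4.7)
The plan is to exploit the tight structure that diameter two imposes on general position sets. Since every shortest path has length at most two, three distinct vertices lie on a common shortest path iff one of them is adjacent to the other two while those two are non-adjacent. Hence a subset $S \subseteq V(G)$ is in general position iff for every $v \in S$ the set $N(v) \cap S$ induces a clique; a brief transitivity argument refines this to: $G[S]$ is a disjoint union of cliques, say $K_1 \sqcup \cdots \sqcup K_m$. Under this description one checks that a vertex $u \notin S$ admits a $(u,S)$-bad path iff there is some index $i$ with $0 < |N(u) \cap K_i| < |K_i|$, while $S \cup \{u\}$ fails to be in general position iff either the same condition holds or $u$ has neighbours in at least two distinct components of $G[S]$.

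Building on these observations, the goal is to construct a maximal general position set $S$ for which the first alternative above holds for every outside vertex. The easy case is when $G$ admits a dominating clique $K$ of size at least two: maximality of $K$ as a clique together with domination forces $0 < |N(u) \cap K| < |K|$ for every $u \notin K$, so $K$ itself is already a terminal set. In general I plan to proceed iteratively, starting from a maximal clique $K$ of $G$ of size at least two (such a $K$ exists unless $G$ has no edge, a trivial case) and setting $S = K$; then one appends further clique components to $S$ in order to handle the undominated vertices of $V(G) \setminus N[K]$, choosing each addition so that $G[S]$ remains a disjoint union of cliques.

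The main obstacle is the possibility of ``middle-only'' outside vertices: those $u \notin S$ which are fully adjacent to several clique components $K_{i_1},\ldots,K_{i_\ell}$ of $G[S]$ with $\ell \geq 2$, but do not partially dominate any component, and hence witness maximality through the second alternative while spoiling the terminal one. My plan for eliminating them is a local merge: replace $K_{i_1}\cup\cdots\cup K_{i_{\ell-1}}$ in $S$ by the single vertex $u$. Using that $u$ is adjacent to every vertex of $K_{i_\ell}$, one verifies directly that $\{u\}\cup K_{i_\ell}$ forms a single new clique component, the general position property is preserved, and $u$ has been absorbed into $S$. After each such merge one re-extends the resulting set to a maximal general position set and repeats. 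The hardest part of the proof will be supplying an appropriate monotone potential---most plausibly a lexicographic combination involving $|V(G)\setminus S|$ and the number of middle-only outside vertices---that strictly decreases at each merge-and-extend step, guaranteeing that the procedure terminates at a genuine terminal set.
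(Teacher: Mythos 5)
Your structural analysis is sound and matches the paper's starting point: in a diameter-two graph a set is in general position precisely when it induces a disjoint union of cliques, a vertex $u\notin S$ is $S$-terminal precisely when it partially dominates some clique component (has both a neighbour and a non-neighbour in it), and the only obstruction to terminality of a maximal such $S$ is a ``middle-only'' vertex fully adjacent to two or more components. The gap is in how you dispose of these middle-only vertices. Your merge-and-extend loop is not shown to terminate, and the missing potential is not a routine detail: the merge can shrink $S$ (e.g.\ when the discarded components are singletons), the subsequent re-extension to a maximal general position set can reinsert the very vertices you just removed and can create fresh middle-only vertices out of vertices that were previously terminal (their witnessing component may have been deleted), so neither $|V(G)\setminus S|$ nor the count of middle-only vertices is monotone in any obvious way. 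As written, the argument could in principle cycle, and you explicitly defer exactly the step that would rule this out.

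The fix is to make the greedy construction smarter so that middle-only vertices never arise, which is what the paper does: take $W_1$ to be a maximum clique, and at each later stage take $W_i$ to be a \emph{largest} clique having \emph{no edges at all} to $W_1\cup\dots\cup W_{i-1}$, stopping when no nonempty such clique exists. Then for any $x\notin W$, letting $r$ be minimal with $x$ adjacent to $W_r$, the vertex $x$ has no edges to $W_1,\dots,W_{r-1}$; if $x$ were fully adjacent to $W_r$ then $W_r\cup\{x\}$ would have been an admissible larger clique at stage $r$, a contradiction. Hence $x$ partially dominates $W_r$ and is terminal. This single maximality-of-choice condition replaces your entire merge machinery and removes the termination issue, since the algorithm only ever adds disjoint cliques. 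I recommend you either adopt this choice rule or supply the monotone potential you promise; without one of the two the proof is incomplete.
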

	\begin{proof}
		Note that a subset $S \subset V(G)$ of a graph $G$ with diameter two is in general position if and only if it is an independent union of cliques. We construct such a set in a greedy way. We start with a maximum clique $W_1$. Then for $W_2$ we take the largest clique that has no edges to $W_1$ and, in general, we take $W_i$ to be the largest clique that has no edges to $W_1 \cup W_2 \cup \dots \cup W_{i-1}$. When this process terminates, we are left with a maximal independent union of cliques $W$.
		
		It remains to show that any vertex $x \notin W$ is the endpoint of a shortest path containing two vertices of $W$. As $x$ was not selected by the algorithm, it must have an edge to some vertex of $W$. Let $r$ be the smallest value such that $x$ has an edge to $W_r$. If $x$ was adjacent to every vertex of $W_r$, then, since $x$ has no edge to $W_1 \cup \dots \cup W_{r-1}$, we would have added it to $W_r$ at that stage of the greedy algorithm; thus there are $y,z\in W_r$ such that $x \sim y$ and $x \not \sim z$, so that $x,y,z$ is a shortest path containing three points of $W \cup \{ x\} $.
	\end{proof}

	\begin{theorem}\label{thm:diameter three}
		Every graph with diameter three contains a terminal set.
	\end{theorem}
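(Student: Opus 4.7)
My plan is to mirror the greedy construction of Theorem~\ref{thm:diam two} but with an extra level of care, since general position sets in a diameter-three graph are no longer characterised as independent unions of cliques: a shortest path of length three can pass through two selected vertices, creating forbidden triples that did not arise in the diameter-two setting.

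First, I would fix an anchor vertex $v \in V(G)$ of eccentricity three and partition the remaining vertices by their distance from $v$ into layers $N_1$, $N_2$, $N_3$. Initialising $S = \{v\}$, I would then enlarge $S$ in two phases. In the outer phase, I would greedily grow a maximal independent union of cliques inside $N_3$ in the style of Theorem~\ref{thm:diam two}; because every vertex of $N_3$ sits at the same maximum distance from $v$, conflicts via length-three geodesics through $v$ are controlled automatically. In the inner phase, I would sweep through $N_2$ and then $N_1$, adding each candidate vertex only when it preserves the general position property against every length-two and length-three geodesic already determined by $S$.

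To verify that the resulting set $S$ is terminal, I would take an arbitrary $x \notin S$ and produce an explicit $(x,S)$-bad path. The argument splits according to the layer of $x$: vertices of $N_3$ are dealt with essentially by the diameter-two argument, as they were processed in a purely clique-based manner; vertices of $N_2$ either complete a forbidden $P_3$ with two neighbours from $S$ or extend outward to a length-three geodesic terminating at $v$; and vertices of $N_1$ admit a similar analysis, with $v$ itself serving as one of the two witnesses on the required short geodesic.

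The hard part will be ruling out the possibility that $x$ was rejected by the greedy rule as the \emph{middle} vertex of a forbidden triple, rather than as an endpoint; in the diameter-two proof this cannot occur, because a forbidden $P_3$ always presents each non-central vertex as an endpoint of the induced geodesic, but in diameter three a bad path of length three can place $x$ in either of two internal positions. To handle this, I plan to augment the greedy rule with a repair step: whenever a middle-vertex conflict is detected, substitute the offending witnesses in $S$ by an alternative pair chosen from the opposite side of the bad path, restoring the endpoint property for $x$. A case analysis on the triple of distances $\bigl(d(v,x),\, d(x,S),\, d(x,v)\bigr)$ within the layer structure induced by $v$ should show that such a repair is always available, completing the proof.
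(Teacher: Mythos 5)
Your proposal identifies the right obstacle --- in diameter three a rejected vertex can sit in the \emph{middle} of a forbidden geodesic rather than at an endpoint, so greedy maximality alone does not give terminality --- but it does not overcome it. The ``repair step'' that is supposed to restore the endpoint property is left entirely unspecified: you do not say which witnesses get substituted, why a valid substitute always exists, why the substitution does not destroy the terminality already certified for previously processed vertices, or why the process terminates. Since this is exactly the point where the diameter-two argument breaks down, the proof has a genuine gap at its core rather than a routine omission.

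There is also a concrete error earlier in the construction. You claim that a maximal independent union of cliques inside the top layer $N_3$ is safe because ``conflicts via length-three geodesics through $v$ are controlled automatically.'' That only rules out bad triples involving $v$. Two cliques $A,B \subseteq N_3$ with no edges between them can still have mixed distances: if $x\sim y$ with $x,y\in A$, $z\in B$, $d(y,z)=2$ and $d(x,z)=3$, then $x,y,z$ lie on a common geodesic and your set $S$ is not even in general position. The paper's proof is built around precisely this issue: it grows cliques $W_1,W_2,\dots$ greedily, but only from vertices that are \emph{equidistant} from every earlier clique and have no edges to them, and then \emph{refines} each new clique by discarding the distance-three vertices whenever a clique sees both distances two and three to some earlier $W_i$. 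That refinement does double duty --- it forces any two cliques of $S$ to be at a uniform mutual distance (so $S$ is in general position), and it guarantees that any excluded vertex $u$ adjacent to some $W_r$ was removed because there is an $s<r$ with $d(u,W_s)=3$ and $d(W_r,W_s)=2$, which exhibits $u$ as the \emph{endpoint} of a length-three bad path through $W_r$ into $W_s$. Nothing in your BFS-layer scheme plays this role. If you want to salvage your approach, you would need to replace both the $N_3$ phase and the repair step with some analogue of the equidistance-plus-refinement mechanism; as written, the argument does not go through.
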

	\begin{proof}
		Let $G$ be a graph with diameter three. We give an algorithm to construct a terminal set in $G$. We begin by setting $W_1$ to be any maximal clique in $G$. Once we have selected cliques $W_1,W_2,\dots ,W_{j-1}$, we add a new clique $W_j$ as follows. Set $S_{j-1} = \bigcup _{1 \leq i \leq j-1}V(W_i)$. For $1 \leq i \leq j-1$, say that a vertex $v$ is $W_i$-equidistant if $d(v,w) = d(v,w')$ for any $w,w' \in W_i$. If $v$ is not $W_i$-equidistant for some $1 \leq i \leq j-1$, then $v$ is $S_{j-1}$-terminal. Let $T_{j-1}$ be the set of vertices of $G$ that are $W_i$-equidistant for $1 \leq i \leq j-1$ such that there are no edges between $T_{j-1}$ and $S_{j-1}$ (so that every vertex of $T_{j-1}$ is at distance two or three from $S_{j-1})$. Within $T_{j-1}$, let $R$ be a maximal clique (i.e. it may not be maximal within $G$,  but no further vertices can be added to $R$ from $T_{j-1}$). 
		
		Notice that for some $i$ we could have some vertices in $R$ that are at distance two from $W_i$ and some that are at distance three from $W_i$. We refine the set $R$ so that this situation does not occur. For $i = 1,2,\dots, j-1$, if there are vertices at distance two and distance three from $W_i$, then we remove from $R$ all of the vertices at distance three from $W_i$. We set $W_j$ to be the set that results once this refining process has been completed. Note that $W_j$ will be non-empty if $R$ was non-empty, since any single vertex from $T_{j-1}$ would satisfy this condition.
		
		When $T_{j-1}$ is empty, the algorithm terminates. We claim that the resulting collection of cliques $S$ is a terminal set. Certainly $S$ is in general position, since if $W,W'$ are cliques of $S$, then either the distance between any vertex of $W$ and any vertex of $W'$ is two, or else the distance between any vertex of $W$ and any vertex of $W'$ is three. Suppose that $u \in V(G)-V(S)$. If $u$ is not $W$-equidistant for some clique $W$ of $S$, then $u$ is $S$-terminal, so we can assume that $u$ is $W$-equidistant for every clique $W$ of $S$. If there is no edge from $u$ to $S$, then $u$ could have been added as the next stage of the algorithm, a contradiction; therefore, let $r = \min \{ i: u \sim W_i, W_i \subseteq S\} $. As $u$ is $W_r$-equidistant, $W_r \cup \{ u\} $ is a clique; in particular, as $W_1$ is a maximum clique of $G$, we must have $r > 1$. 
		
		If follows that $u$ must have been deleted in the refining process during the $r$-th stage of the algorithm.  Hence there must be an $s < r$ such that $d(w,w') = 2$ and $d(w,u) = 3$ for all $w \in W_s$, $w' \in W_r$; however, this implies that $u$ is $S$-terminal, as it is the initial vertex of a shortest path of length three to $W_s$ that passes through $W_r$. It follows that $S$ is a terminal set.
	\end{proof}
	It has also been verified computationally that all graphs with order at most eleven have terminal sets~\cite{Erskine}. We make the following conjecture.
	\begin{conjecture}\label{con:terminalsets}
		Every graph has a terminal set.
	\end{conjecture}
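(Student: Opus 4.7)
The plan is to attempt an inductive proof on the diameter $d = \diam(G)$, extending the greedy constructions of Theorems~\ref{thm:diam two} and~\ref{thm:diameter three} to arbitrary $d$. Those proofs build the terminal set as an independent union of cliques $W_1, W_2, \ldots$ chosen so that the pairwise distances between blocks are uniform (either $2$ or $3$), with the key insight that any vertex $u$ failing to be $W_i$-equidistant for some block is automatically $S$-terminal. The generalisation would maintain the same distance-uniformity requirement but allow the inter-block distance to take any value in $\{2,3,\ldots,d\}$, while augmenting the refinement step from the diameter-three proof to handle the richer distance profiles that appear for larger $d$.

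More concretely, the proposed algorithm starts with $W_1$ a maximum clique and, given $W_1,\ldots,W_{j-1}$ with $S_{j-1} = \bigcup_{i<j} V(W_i)$, defines $T_{j-1}$ to be the set of vertices $v$ such that (a) $v$ has no neighbour in $S_{j-1}$, and (b) $v$ is $W_i$-equidistant for every $i<j$. Within $T_{j-1}$ we select a maximal clique $R$ and then refine $R$ by pruning, for each pair $(R,W_i)$, those vertices of $R$ whose distance to $W_i$ exceeds the minimum distance realised; the resulting clique is $W_j$. One would then argue that every vertex $u$ outside the final set $S$ either fails to be equidistant to some $W_i$ (and is immediately $S$-terminal) or else was eligible for the algorithm but was blocked by a distance inconsistency forcing a bad path from $u$ that threads through two earlier blocks.

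The hardest step will be the refinement-and-verification phase when $d \geq 4$. With only two possible between-block distances the diameter-three proof enjoys a clean ``collapse to the smaller distance'' rule, and the bad path covering a pruned vertex has length at most three; for larger $d$ the pruned vertex $u$ may require a bad path traversing several blocks of $S$, and constructing such a path from the equidistance and adjacency data seems to demand a more global argument than the existing proofs supply. A plausible back-up is a smallest-counterexample argument: take $G$ of minimum order violating the conjecture and analyse the interaction between a smallest maximal general position set $S$ and an uncovered vertex $u$, hoping either to swap a vertex of $S$ for $u$ while preserving maximality, or to contradict the minimality of $G$ by deleting a carefully chosen simplicial vertex. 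I expect that a genuinely new structural idea beyond clique-by-clique greediness will be required for the full conjecture.
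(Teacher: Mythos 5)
This statement is a \emph{conjecture} in the paper: the authors do not prove it, and offer only partial evidence (existence of terminal sets for diameter at most three, for chordal graphs and cographs, plus computational verification up to order eleven). Your submission is likewise not a proof but a research plan, and to your credit you say so explicitly; so there is no completed argument on either side to compare. Still, it is worth naming precisely where your proposed extension breaks, because it is earlier than the ``refinement-and-verification phase'' you identify. For diameter $d\geq 4$ the constructed set can fail to be in general position at all: if $W_1,W_2,W_3$ are pairwise equidistant blocks with $d(W_1,W_2)=d(W_2,W_3)=2$ and $d(W_1,W_3)=4$, a shortest $W_1,W_3$-path may pass through $W_2$, putting three blocks in a line. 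The diameter-three proof avoids this only because $2+2>3\geq d(W_1,W_3)$, so no shortest path between two blocks can thread a third; that arithmetic is exactly what disappears at $d=4$. A second, independent obstruction is that for large diameter a maximal general position set need not be an independent union of cliques, so the clique-by-clique template is not even guaranteed to reach a maximal general position set, let alone a terminal one.

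Your back-up plan (minimal counterexample, deleting a simplicial vertex or swapping vertices) is essentially what the paper carries out in Lemma~\ref{lem:simplicial cutsets}: a minimal graph without a terminal set must be connected, have no simplicial vertices, no clique cut-sets, and no twins. This yields the chordal and cograph cases but stops well short of a contradiction for general graphs, which is why the statement remains Conjecture~\ref{con:terminalsets}. Your closing assessment --- that a genuinely new structural idea beyond clique-by-clique greediness is needed --- matches the state of the paper.
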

	
	The following lemma gives some information on the properties of a hypothetical minimal graph that does not contain a terminal set. 
	
	\begin{lemma}\label{lem:simplicial cutsets}
		If $G$ does not have a terminal set, but every proper isometric subgraph of $G$ does contain a terminal set, then \begin{itemize}
			\item $G$ is connected,
			\item $G$ has no simplicial vertices,
			\item no cut-set of $G$ is a clique,
			\item $G$ is twin-free.
		\end{itemize}
	\end{lemma}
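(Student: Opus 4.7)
My plan is to contradict each of the four negations by exhibiting a terminal set of $G$ built from a terminal set of an appropriate proper isometric subgraph, which exists by hypothesis. Two facts recur throughout: if $H$ is an isometric subgraph of $G$, then any general position set of $H$ remains in general position in $G$, and any bad path in $H$ is still a shortest path in $G$.

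Connectedness is immediate: if $G$ has components $C_1,\dots,C_k$ with $k\ge 2$, then each $C_i$ is a proper isometric subgraph, and the disjoint union of terminal sets, one per component, is a terminal set of $G$. For twins $u,v$ with $N(u)=N(v)$, the subgraph $G-v$ is isometric since any shortest path through $v$ can be rerouted through $u$; given a terminal set $T$ of $G-v$, a case split on whether $u\in T$ promotes $T$ (or $T\cup\{v\}$) to a terminal set of $G$. When $u\notin T$, the bad path at $u$ transfers to a bad path at $v$ by twin symmetry; when $u\in T$, the twin property forces any violation of general position in $T\cup\{v\}$ to place $v$ at an endpoint of the offending shortest path (else $u$ would witness the same violation in $G-v$, contradicting $T$ being in general position there). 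The simplicial-vertex case runs in the same spirit: the clique structure of $N(v)$ makes $G-v$ isometric and forbids $v$ from being an interior vertex of any shortest path in $G$, so the same dichotomy yields a terminal set of $G$.

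The hardest case is the clique cut-set. If $K$ is a clique whose removal separates $G-K$ into components $G_1,\dots,G_p$ with $p\ge 2$, then each $H_i=G[V(G_i)\cup K]$ is a proper isometric subgraph: a shortest $G$-path leaving $V(H_i)$ would have to cross $K$ twice, and the edge between those two $K$-vertices lets one shortcut the detour, contradicting minimality. Let $T_1$ be a terminal set of $H_1$; it is in general position in $G$, and every $w\in V(H_1)\setminus T_1$ is $T_1$-terminal in $G$. The delicate step is producing a bad path in $G$ for an arbitrary $u\in V(G_j)$, $j\ne 1$. My plan is to pick $k^*\in K$ realising $d_G(u,K)$ and concatenate a shortest $u,k^*$-path (which stays within $V(G_j)\cup K$ by isometry of $H_j$) with a bad path in $H_1$ starting at $k^*$, or, if $k^*\in T_1$, with a shortest $H_1$-path from $k^*$ to another vertex of $T_1$. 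The main obstacle is that this concatenation can be shortcut through a different $k\in K$; however, since $K$ is a clique, any two of its vertices have distances to a fixed target differing by at most one, so such a shortcut saves at most one step, a saving that can be absorbed by re-selecting the entry vertex inside $K$ and invoking the terminal property of $T_1$ to supply a fresh bad path whenever necessary. I expect the principal technical difficulty to lie in orchestrating this re-selection across all of $K$; should residual edge cases persist, the remedy is to augment $T_1$ by a carefully chosen vertex of $V(G_j)$ adjacent to $K$, with the block structure imposed by the cut-set ensuring general position is preserved.
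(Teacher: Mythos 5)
Your overall strategy coincides with the paper's: for each bullet, find a proper isometric subgraph, take a terminal set there, and lift it (possibly augmented) to $G$. Your treatment of connectedness, twins and simplicial vertices is essentially the paper's argument in compressed form and is fine, though in the twin case you should state the conclusion of the dichotomy explicitly: when $u\in T$ and $T\cup\{v\}$ fails to be in general position, what you get is that $v$ is $T$-terminal, so it is $T$ itself (not $T\cup\{v\}$) that serves as the terminal set of $G$.

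The clique cut-set case, however, contains a genuine gap, and it is exactly where you admit the difficulty lies. Two specific problems. First, you never address the possibility that the terminal set $T_1$ of $H_1=G[V(G_1)\cup K]$ satisfies $T_1\subseteq K$; since a shortest path meets a clique in at most two vertices, this forces $T_1=K$, and then there is no vertex of $T_1$ in $G_1$ to anchor your concatenation, while a vertex $u$ of another component that is equidistant from all of $K$ admits no $(u,K)$-bad path at all. The paper handles this by a preliminary case split: if $W=K$ is terminal in $G[C\cup W]$ for \emph{every} component $C$, then $W$ itself is a terminal set of $G$; otherwise one can \emph{choose} a component whose terminal set meets that component, and only then run the concatenation argument. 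Second, your fix for the shortcut-through-$K$ problem is not a proof: iteratively ``re-selecting the entry vertex'' has no termination argument, and ``augmenting $T_1$ by a vertex of $V(G_j)$'' is unsubstantiated (it can destroy general position or require re-verifying terminality everywhere). The paper avoids the regress by anchoring differently: it takes a genuine shortest $G$-path $P$ from $x$ to a vertex of $T_1\cap V(G_1)$ (not to $K$), lets $w$ be the first vertex of $P$ in $K$, and concatenates the $x,w$-prefix with a $(w,T_1)$-bad path $Q$ of $H_1$. The only way this concatenation can fail to be geodesic is if the first edge of $Q$ is $ww'$ with $w'\in K$ and $d_G(x,w)=d_G(x,w')$; since $Q$, being a shortest path, contains at most two vertices of the clique $K$, this failure occurs at most once and is repaired by a single substitution of the prefix. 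You would need to supply an argument of this precision (or an equivalent one) for your proof to close.
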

	\begin{proof}
		Let $G$ be as described. In particular, $G$ is not a clique. It is trivial that $G$ is connected, for otherwise taking the union of terminal sets of the components of $G$ would yield a terminal set of $G$. 
		
		Suppose that $G$ has a simplicial vertex $u$. The graph $G-u$ is an isometric subgraph of $G$, so by assumption has a terminal set $S$. If $S = N(u)$, then $u \cup N(u)$ is terminal in $G$. Otherwise, we now show that if we add the vertex $u$ back to $G-u$, the set $S$ remains a terminal set. If $S \cap N(u) = \emptyset $, then any vertex $v \in N(u)$ is the endpoint of a $(v,S)$-bad path and adding the edge $uv$ gives a $(u,S)$-bad path. If $N(u) \subset S$, then, as we can assume that $S \neq N(u)$, there is a vertex $w \in S-N(u)$ and any shortest $u,w$-path would be a $(u,S)$-bad path, since it passes through $N(u) \subset S$. Finally, we can suppose that $N(u) \cap S \neq \emptyset $ and $N(u)-S \neq \emptyset $. In this case any $w_2 \in N(u)-S$ is the endpoint of a $(w_2,S)$-bad path $P$; if $P$ passes through some $w_1 \in N(u) \cap S$, then replacing the initial edge $w_2w_1$ by $uw_1$ gives a $(u,S)$-bad path, whereas if $P$ does not pass through $N(u) \cap S$, then adding the edge $uw_2$ also yields a $(u,S)$-bad path. Therefore $G$ has no simplicial vertex.
		
		Now let $G$ be a minimal graph without a terminal set that has a clique $W$ that is also a cut-set. For each component $C$ of $G-W$ there is a terminal set $S_C$ in $G[C \cup W]$. If $S_C \subseteq W$, then we must have $S_C = W$. If $S_C = W$ is a terminal set of $G[C \cup W]$ for every component $C$ of $G-W$, then $W$ is a terminal set of $G$. Otherwise, fix a component $C$ of $G-W$ such that there is a terminal set $S_C$ of $G[C \cup W]$ with $S_C \cap C \neq \emptyset $.
		
		We now show that this $S_C$ is a terminal set of $G$, i.e. if $x \in V(G-S_C)$, then $x$ is the endpoint of an $(x,S_C)$-bad path. As $G[C \cup W]$ is an isometric subgraph of $G$, this is true by assumption if $x \in (C \cup W)-S_C$, so suppose that $x \in C^{\prime }$, where $C^{\prime }$ is a component of $G-W$ distinct from $C$. Consider the collection of shortest paths from $x$ to the vertices of $S_C \cap C$. If any of these paths passes through another vertex of $S_C$, in particular a vertex of $W\cap S_C$ (which may be empty), then we are done. Otherwise, let $P$ be a shortest path from $x$ to a vertex $u \in S_C \cap C$. Set $w$ to be the first vertex of $P$ in $W$ and $P^{\prime }$ to be the $x,w$ section of $P$. As $w \not \in S_C$, by assumption there is a $(w,S_C)$-bad path $Q$ in $G[C \cup W]$, which is also a $(w,S_C)$-bad path in $G$. Consider the concatenated path $P^{\prime },Q$ in $G$. The only reason that this would not be an $(x,S_C)$-bad path in $G$ is if the first edge of $Q$ is $w,w^{\prime } $, where $w^{\prime } \in W$ and $d_G(x,w) = d_G(x,w^{\prime })$; in this case, we can replace $P^{\prime } $ by a shortest $x,w^{\prime }$-path and obtain the concatenated $(x,S_C)$-bad path $P_1,Q$.
		
		Suppose finally that $G$ contains twins $x$ and $y$. Then $G - x$ is an isometric subgraph and hence has a terminal set $S$ by assumption. If $y \not \in S$, then $y$ is the endpoint of a $(y,S)$-bad path $P$, and changing the initial point of $P$ to $x$ shows that $x$ would also be the endpoint of an $(x,S)$-bad path, so that $S$ is terminal in $G$. If $y \in S$, but $S \cap N(y) = \emptyset $, then $S \cup \{ x\} $ is terminal in $G$. Finally, if $y \in S$ and $S \cap N(y) \neq \emptyset $, then if $x \not \sim y$, $x$ would be the endpoint of a shortest path to $y$ via $S \cap N(y)$ and so $x$ would be terminal with respect to $S$ in $G$, whilst if $x \sim y$, then $S \cup \{ x\} $ would be a terminal set in $G$. 
	\end{proof}

	The existence of a terminal set in any cograph $G$ follows from Theorem~\ref{thm:diam two} by taking the union of terminal sets of the components of $G$, each of which has diameter at most two. Lemma~\ref{lem:simplicial cutsets} gives an alternative inductive proof for cographs.
	\begin{corollary}\label{cor:cograph}
		Every cograph has a terminal set.
	\end{corollary}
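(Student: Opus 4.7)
The plan is to reduce to Theorem~\ref{thm:diam two} using two observations: (a) every connected cograph has diameter at most two, and (b) the union of terminal sets of the connected components of any graph is itself a terminal set of the graph. Granted these, if $G$ is a cograph with components $C_1,\dots ,C_k$, each $C_i$ is a connected cograph of diameter at most two and therefore admits a terminal set $S_i$ by Theorem~\ref{thm:diam two}; I would then take $S = S_1 \cup \dots \cup S_k$ as the desired terminal set of $G$.

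For observation (a) I would give a one-line argument: if some pair of vertices were at distance at least three in a connected cograph, then any four consecutive vertices along a shortest path between them would form an induced $P_4$ (consecutive vertices being adjacent, and non-consecutive ones being non-adjacent, for otherwise the path could be shortened), contradicting the definition of a cograph. For observation (b), I would use the fact that distinct components contain no common shortest path, so every shortest path of $G$ lives entirely inside one component. From this the three required properties transfer layer-by-layer: $S$ is in general position because each $S_i$ is; $S$ is maximal because adding any $v \in C_j \setminus S_j$ would, by maximality of $S_j$ in $C_j$, create three vertices on a shortest path inside $C_j$, which is also a shortest path of $G$; and $S$ is terminal because any $(v,S_j)$-bad path inside $C_j$ is automatically a $(v,S)$-bad path in $G$.

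I do not expect a serious obstacle: every step is either a definitional unfolding or the componentwise transfer of shortest paths, and the only non-trivial input is the classical diameter-two bound for connected cographs. If one preferred the route suggested by Lemma~\ref{lem:simplicial cutsets}, one would instead pick a hypothetical minimal cograph $G$ without a terminal set, note that a connected cograph with at least two vertices is a join $G_1 \vee G_2$ of smaller cographs, and derive a contradiction by exhibiting either a simplicial vertex, a pair of twins, or a clique cut-set inside this join. The main subtlety there would be handling the case when one side of the join is itself disconnected; by contrast, the componentwise approach above avoids any case analysis and is what I would write up.
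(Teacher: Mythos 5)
Your argument is correct, but it is not the one the paper puts inside the proof environment: there the authors induct on the order of the cograph using the fact that every cograph contains a pair of twins, and then invoke the twin-handling case of Lemma~\ref{lem:simplicial cutsets} to extend a terminal set of $G-x$ to one of $G$. Your route --- decompose into connected components, observe each component has diameter at most two (your induced-$P_4$ argument for this is the standard one and is fine), apply Theorem~\ref{thm:diam two} componentwise, and note that terminality, maximality and general position all transfer to the union because every shortest path of a disconnected graph lies inside a single component --- is exactly the argument the paper states informally in the sentence immediately preceding the corollary, where the twin-based induction is described as ``an alternative inductive proof.'' The two approaches buy slightly different things: yours is shorter, self-contained modulo Theorem~\ref{thm:diam two}, and inherits the constructive greedy algorithm of that theorem directly; the paper's twin induction instead illustrates how the reduction machinery of Lemma~\ref{lem:simplicial cutsets} can be driven by a single structural feature (twins) and yields a polynomial-time construction of a different flavour, as the authors note after Corollary~\ref{cor:chordal}. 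Your decision to avoid the join decomposition $G_1 \vee G_2$ is sensible --- that is where the case analysis would have crept in --- and nothing in your write-up needs repair.
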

	\begin{proof}
		We perform induction on the order of the cograph $G$, beginning with $K_1$. Every cograph contains a pair of twins $x,y$ and by the process described in Lemma~\ref{lem:simplicial cutsets} combined with induction we can obtain a terminal set in $G$. 
	\end{proof}
	More significantly, Lemma~\ref{lem:simplicial cutsets} also shows that any chordal graph has a terminal set.
	\begin{corollary}\label{cor:chordal}
		Every chordal graph has a terminal set.  
	\end{corollary}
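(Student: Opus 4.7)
The plan is to proceed by strong induction on $|V(G)|$, using Lemma~\ref{lem:simplicial cutsets} together with the classical fact that every chordal graph contains a simplicial vertex. First I would set up a minimum counterexample: suppose for contradiction that some chordal graph fails to have a terminal set and let $G$ be one of smallest order with this property. The base case is trivial, since $K_1$ has $\{v\}$ as a terminal set; more generally, if $G$ is a complete graph then $V(G)$ is vacuously a terminal set. Hence we may assume $G$ is not complete.

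Next I would check that $G$ satisfies the hypothesis of Lemma~\ref{lem:simplicial cutsets}. The key observation is that the class of chordal graphs is closed under taking induced subgraphs, and every isometric subgraph is by definition induced. Consequently, every proper isometric subgraph $H$ of $G$ is a chordal graph of strictly smaller order, and by the minimality of $G$ the graph $H$ admits a terminal set. Thus Lemma~\ref{lem:simplicial cutsets} applies to $G$ and forces $G$ to contain no simplicial vertex.

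This is the contradiction we need: Dirac's theorem on chordal graphs (equivalently, the existence of a perfect elimination ordering) ensures that every non-complete chordal graph has at least two non-adjacent simplicial vertices. So the non-complete chordal graph $G$ must have a simplicial vertex, which is incompatible with the conclusion of Lemma~\ref{lem:simplicial cutsets}. Therefore no such minimum counterexample can exist, and every chordal graph has a terminal set.

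The main obstacle is essentially absent, because Lemma~\ref{lem:simplicial cutsets} does all the heavy lifting; the only point requiring care is reconciling the inductive hypothesis with the lemma's stipulation on \emph{isometric} rather than arbitrary induced subgraphs, which is handled cleanly by the hereditariness of chordality. As an alternative route, one could instead invoke the third bullet of Lemma~\ref{lem:simplicial cutsets}, together with the classical clique-separator decomposition stating that every non-complete chordal graph has a clique cut-set, to arrive at the same contradiction.
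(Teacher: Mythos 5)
Your proof is correct and follows essentially the same route as the paper: both hinge on Dirac's theorem that chordal graphs have simplicial vertices together with the simplicial-vertex clause of Lemma~\ref{lem:simplicial cutsets}. The only cosmetic difference is that you phrase it as a minimal counterexample and invoke the lemma's statement (justified by noting chordality is hereditary and isometric subgraphs are induced), whereas the paper runs a direct induction and reuses the lemma's internal argument.
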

	\begin{proof}
		We prove the result by induction on the order. The basis follows trivially. For the induction step, let $G$ be a chordal graph with order $n$. The graph $G$ contains a simplicial vertex $u$ and $G-u$ is also chordal. As $G-u$ has a terminal set by induction, the argument of Lemma~\ref{lem:simplicial cutsets} shows that $G$ also has a terminal set.
	\end{proof}
	
	Observe that the inductive processes described in Corollaries~\ref{cor:chordal} and~\ref{cor:cograph} can also be converted into polynomial time algorithms to construct a terminal set in any chordal graph or cograph. It is an interesting question how far away the orders of the sets produced by these algorithms and those in Theorems~\ref{thm:diam two} and~\ref{thm:diameter three} can be from the lower terminal position number.

	\section{Values of the terminal position number}\label{sec:terminal pos num}
	
	In this final section we determine the terminal and lower terminal position numbers of some common graph families, namely Kneser graphs, line graphs of complete graphs and complete multipartite graphs. These graphs have diameter two (for sufficiently large order $n$), so the existence of a terminal set is guaranteed by Theorem~\ref{thm:diam two}. We begin with a lemma on the structure of terminal sets in diameter two graphs.

	\begin{lemma}\label{lem:diam2indset}
		In a diameter two graph, no terminal set is an independent set.
	\end{lemma}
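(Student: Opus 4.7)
The plan is to argue by contradiction: suppose $S$ is a terminal set of a diameter two graph $G$ and $S$ is independent, and derive a contradiction from the definition of terminality combined with the severe restriction that diameter two places on shortest paths.

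First I would observe that in a diameter two graph any shortest path has length at most two, and in particular any $(u,S)$-bad path (where $u \in V(G)-S$) has length exactly two, because a shortest path of length one with endpoint $u \notin S$ contains only one other vertex and so cannot contain two vertices of $S$. Hence such a bad path has the form $u,v,w$ with $v,w \in S$, $u \sim v$, $v \sim w$, and $u \not\sim w$. The crucial feature here is that $v$ and $w$ are adjacent vertices both lying in $S$.

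Next I would note that the whole vertex set $V(G)$ cannot serve as our terminal set, because $G$ has diameter two and therefore contains a shortest path of length two, which immediately provides three vertices of $V(G)$ on a common shortest path, violating general position. Consequently there must exist at least one vertex $u \in V(G)-S$, and applying terminality to this $u$ produces a bad path of the form above containing two adjacent vertices of $S$. This directly contradicts the assumption that $S$ is independent.

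I do not expect a substantial obstacle; the argument is essentially a one-line observation once one unpacks what a bad path must look like in a diameter two graph. The only subtle point to handle carefully is ruling out the degenerate case $S = V(G)$, which is why I would explicitly invoke the existence of a length-two shortest path in $G$ before selecting the witness $u$.
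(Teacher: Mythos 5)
Your proposal is correct and follows essentially the same argument as the paper: in a diameter two graph any $(u,S)$-bad path with $u \notin S$ must have length two and hence forces two adjacent vertices of $S$, contradicting independence. Your extra care in ruling out $S = V(G)$ is a minor refinement the paper leaves implicit, but the core reasoning is identical.
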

	\begin{proof}
		Suppose that $S$ is an independent set that is also a maximal general position set. Let $u \in V(G)-S$. Then $u$ cannot be the initial vertex of a path of length two that also contains two vertices of $S$, since this would imply that these two vertices of $S$ are adjacent.
	\end{proof}
	The Kneser graph $K(n,2)$ is the graph with vertex set equal to all subsets of order two of $\{ 1,2,\dots ,n\} $; two such subsets $A,B \subset \{ 1,2,\dots ,n\} $ are adjacent in $K(n,2)$ if and only if $A \cap B = \emptyset $. It was shown in~\cite{Ghorbani-2021} that for large enough $n$ the general position number of the Kneser graph $K(n,2)$ is $n-1$, which corresponds to a largest independent set, whereas the lower general position number is just six for $n \geq 12$~\cite{SteKlaKriTuiYer}. The terminal position number lies strictly between these two numbers for large enough $n$. Note that in this case the algorithm of Theorem~\ref{thm:diam two} gives the exact answer.
	\begin{theorem}
		The terminal and lower terminal position numbers of the Kneser graph $K(n,2)$ are given by 
		$$\tp (K(n,2)) = \tp ^-(K(n,2)) = \left\{
		\begin{array}{ll}
			6;     &  n=5, \\
			\left \lfloor \frac{n}{2} \right \rfloor ;     &  n \geq 6.
		\end{array} \right.
		$$
	\end{theorem}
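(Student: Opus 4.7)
The plan is to exploit that $\diam K(n,2) = 2$ for $n \geq 5$, so by the proof of Theorem~\ref{thm:diam two} every general position set of $K(n,2)$ is an independent union of cliques. In $K(n,2)$, cliques are matchings in $\{1,\ldots,n\}$, while two cliques being non-adjacent translates into every vertex of one sharing an element with every vertex of the other. I will also use the local criterion that a maximal GP set $S$ in a diameter-$2$ graph is terminal if and only if every $u \notin S$ is partially adjacent to some clique of $S$ (adjacent to at least one but not all of its vertices), since the only way to obtain a length-$2$ bad path from $u$ is of the form $u - z - w$ with $z,w$ in a common clique of $S$ and $u \not\sim w$.

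For $n \geq 6$ I would first establish the upper bound by showing that a maximum matching $M = \{\{1,2\},\{3,4\},\ldots\}$ of size $\lfloor n/2 \rfloor$ is a terminal set. Maximality is immediate: every $u \notin M$ has at least one matched endpoint, so has both a neighbour and a non-neighbour in $M$, giving an induced $P_3$. Terminality follows because if $u=\{a,b\}$ and $a \in E_a$, $b \in E_b \in M$ (with one unmatched element allowed when $n$ is odd), then using $|M| \geq 3$ one can pick $E \in M \setminus \{E_a,E_b\}$ and read off the bad path $u - E - E_a$.

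For the lower bound let $S$ be any terminal set; by Lemma~\ref{lem:diam2indset} it contains at least one clique of size $\geq 2$. I would split into cases. If $S$ has a clique $C$ with $|C| \geq 3$, then no further vertex belongs to $S$: a singleton or any vertex of another clique of $S$ would be a 2-subset intersecting each of the pairwise disjoint edges of $C$, impossible once $|C| \geq 3$. Hence $S=C$ is a matching and $|S| \leq \lfloor n/2 \rfloor$; when $|S| < \lfloor n/2 \rfloor$ an uncovered pair of elements forms a 2-subset disjoint from all of $S$ which extends the clique, so maximality forces $|S| = \lfloor n/2 \rfloor$. Otherwise every clique of $S$ has size at most $2$, and in this case I would show $S$ lies among the 2-subsets of a single 4-element set $T$: two cliques $\{X_1,X_2\}$ and $\{Y_1,Y_2\}$ of size $2$ force $Y_1,Y_2 \subseteq X_1 \cup X_2$ (since each $Y_j$ must meet both $X_i$), so they share the same 4-element support, while any singleton attached to a size-2 clique sits in that support for the same reason. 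For $n \geq 6$ the complement $\{1,\ldots,n\} \setminus T$ contains a pair $u$, which is disjoint from every member of $S$ and hence adjacent to all of $S$; this prevents $u$ from being partially adjacent to any clique, contradicting terminality. The second case is therefore vacuous for $n \geq 6$, giving $\tp(K(n,2)) = \tp^-(K(n,2)) = \lfloor n/2 \rfloor$.

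For $n=5$ the maximum clique size is $2$, so the first case collapses and every terminal set lives among the 2-subsets of some 4-element $T$; however now $\{1,\ldots,5\} \setminus T = \{\omega\}$ has size $1$ and admits no pair, so the obstruction of the previous paragraph evaporates. Every $u \notin S$ has the form $\{a,\omega\}$ with $a \in T$ and is adjacent exactly to the 2-subsets of $T \setminus \{a\}$, which contain one vertex from each of the three perfect matchings of $T$; thus $u$ is partially adjacent to each matching, and the full set of six 2-subsets of $T$ is terminal. A brief verification shows this is also the minimum: any proper subset admits an extension preserving the independent-union-of-cliques structure (e.g.\ by re-attaching the partner of an orphan singleton), so $\tp(K(5,2)) = \tp^-(K(5,2)) = 6$. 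I expect the main obstacle to be the structural argument in the low-clique case --- forcing $S$ into the 2-subsets of a 4-set and tracking precisely when a non-terminal 2-subset lurks in $T^c$, which is exactly what separates $n=5$ from $n \geq 6$.
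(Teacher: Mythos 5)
Your proof is correct and, for $n\geq 6$, follows essentially the same route as the paper: both arguments rest on the diameter-two characterisation of general position sets as independent unions of cliques, verify that a maximum matching is terminal, and kill the alternative configurations (those supported on a $4$-element set $T$) by exhibiting a pair disjoint from $T$ that is adjacent to all of $S$ and hence admits no bad path. The difference is that the paper simply cites the classification of maximal general position sets of $K(n,2)$ from the Ghorbani et al.\ paper (maximum clique or a $3K_2$ on a $4$-set), whereas you derive the dichotomy from scratch via the observation that a clique of size at least three forces $S$ to be a single matching, and otherwise every further vertex of $S$ must meet both edges of some $K_2$ and so lies inside its $4$-element support. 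Where you genuinely diverge is the Petersen case: the paper anchors its lower bound on the known values $\gp^-(K(5,2))=4$ and $\gp(K(5,2))=6$ and then rules out terminal sets of order $4$ or $5$ by counting that the neighbourhoods of the $K_2$-components reach at most $9$ of the $10$ vertices; you instead reuse the confinement-to-$T$ argument to show that every terminal set sits among the six $2$-subsets of a $4$-set and that only the full set of six is maximal. Your route is more self-contained (no appeal to the lower general position number of the Petersen graph) and yields a complete characterisation of its terminal sets as a bonus. The only spot to tighten is the ``brief verification'' for $n=5$: re-attaching the partner of an orphan singleton handles incomplete pairs, but you should also note that a union of only one or two complete matching-pairs of $T$ can be extended by a vertex of a missing pair (which becomes a new singleton component, preserving the independent-union-of-cliques structure), so it too fails maximality; this is easy but is a separate case from the one your parenthetical mentions.
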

	\begin{proof}
		We start with the Petersen graph $K(5,2)$. Let $S$ be a terminal set. As the Petersen graph is triangle-free, by Lemma~\ref{lem:diam2indset} $S$ induces $rK_2 \cup sK_1$, where $r \geq 1$. The general position set displayed in Figure~\ref{fig:Petersen} is terminal; this set is also a largest possible general position set. It is also shown in~\cite{SteKlaKriTuiYer} that the lower general position number of the Petersen graph is four. Suppose therefore that $S$ has order four or five. Observe that a vertex outside $S$ is the initial point of a shortest path containing two vertices of $S$ only if it is a neighbour of a vertex in one of the $r$ copies of $K_2$ induced by $S$.  If $r = 1$, then there are just four such vertices, which, together with the vertices of $S$, accounts for at most nine vertices. If $r = 2$, then it follows from the fact that $K(5,2)$ has diameter two that the neighbourhoods of the two copies of $K_2$ are identical, so this also accounts for at most nine vertices. It follows that there are at least six vertices in any terminal set of $K(5,2)$. 
		
		Now we may assume that $n \geq 6$. By Lemma~\ref{lem:diam2indset} and the discussion in~\cite{Ghorbani-2021}, it follows that there are just two types of maximal general position set $S$ to consider, a clique of order $\left \lfloor \frac{n}{2} \right \rfloor $ or a copy of $3K_2$ induced by subsets of size two of a subset of $[n]$ of order four (say all pairs of vertices from $\{ 1,2,3,4\} $). In the latter case, the vertex $\{ 5,6\} $ is adjacent to every vertex of the general position set and hence is not terminal with respect to $S$. Suppose that $S$ is a maximum clique and let $\{ a,b\} $ be any vertex outside the clique. Then there are vertices of $S$, say $\{ a,c\} $ and $\{ d,e\} $, such that $|\{ a,b\} \cap \{ a,c\} | = 1$ and $\{ a,b\} \cap \{ d,e\} = \emptyset $, so that $\{ a,b\} $ is terminal with respect to $S$.
	\end{proof}
	The general and lower general position numbers of line graphs were studied in~\cite{SteKlaKriTuiYer} and~\cite{Ghorbani-2021}; in this case we show that the terminal position number coincides with the general position number.  
	\begin{theorem}
		The terminal position number of $L(K_n)$ is 
		$$\tp (L(K_n)) = \gp (L(K_n)) = \left\{
		\begin{array}{ll}
			n;     &  \text{ if } 3|n, \\
			n-1 ;     &  \text{ otherwise}.
		\end{array} \right.
		$$
		For $n \geq 4$ the lower terminal position number is $\tp ^-(L(K_n)) = n-1$. 
	\end{theorem}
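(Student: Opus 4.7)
The plan rests on a diameter-two characterisation: for $n \geq 4$ the graph $L(K_n)$ has diameter two, so by the observation preceding Lemma~\ref{lem:diam2indset} a subset is in general position exactly when its induced subgraph is a disjoint union of cliques. Every clique of $L(K_n)$ is either a \emph{star-piece} (a non-empty set of $K_n$-edges sharing a common vertex) or a triangle (the three edges of some $K_n$-triangle), and two such cliques have no edges between them in $L(K_n)$ iff their supports in $V(K_n)$ are disjoint. Thus every general position set $S$ corresponds to a collection $E_1,\dots,E_k$ of star-pieces and triangles in $K_n$ with pairwise disjoint supports $V(E_i)\subseteq V(K_n)$; writing $U=\bigcup_i V(E_i)$ and letting $s$ denote the number of star-pieces, a direct count gives $|S|=|U|-s$ because $|V(E_i)|=|E_i|+1$ for star-pieces and $|V(E_i)|=|E_i|$ for triangles.

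For the equality $\tp(L(K_n))=\gp(L(K_n))$, since $\tp\leq\gp$ is immediate I only need terminal sets attaining the known value of $\gp$. When $3\mid n$, partition $V(K_n)$ into $n/3$ triangles of $K_n$ and take $S$ to be the set of all $n$ edges of these triangles; for any $e=uv\notin S$ with $u$ in triangle $T_u=\{u,u',u''\}$ and $v$ in a distinct triangle, the path $uv,\,uu',\,u'u''$ is a shortest length-two path containing two vertices of $S$. When $3\nmid n$, let $S$ be the full star $\{vy:y\neq v\}$ at any $v\in V(K_n)$; for $e=uw$ with $u,w\neq v$, choosing any $x\in V(K_n)\setminus\{u,v,w\}$ (which exists since $n\geq 4$), the path $uw,\,vu,\,vx$ works. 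Maximality of $S$ in each case follows because the same paths show that adding any missing edge creates a three-in-line.

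The main content is the bound $\tp^-(L(K_n))\geq n-1$ for $n\geq 4$. Let $S$ be any terminal set, decomposed as above. \emph{Claim 1:} $|V(K_n)\setminus U|\leq 1$, since if $x,y\notin U$ then $xy\notin S$ has no $S$-edge sharing a $K_n$-vertex with it and so cannot be terminal. \emph{Claim 2:} at most one of the $E_i$ is a star-piece, for if $E_i,E_j$ are star-pieces with centres $v_i^*,v_j^*$ then $e=v_i^*v_j^*\notin S$, and any $f\in S$ with $f\sim e$ in $L(K_n)$ must contain $v_i^*$ or $v_j^*$ and so lies in $E_i$ or $E_j$ by disjointness of supports; if $f=v_i^*\ell\in E_i$, then any $g\in S$ with $g\sim f$ and $g\not\sim e$ must contain $\ell$ (since $g$ can contain neither $v_i^*$ nor $v_j^*$), but the only $S$-edge incident to $\ell$ is $v_i^*\ell=f$ itself, a contradiction. \emph{Claim 3:} if a star-piece $E_1$ with centre $v_1^*$ is present then $U=V(K_n)$, since otherwise any $z\notin U$ yields an edge $v_1^*z$ which could be appended to $E_1$ without destroying the disjoint clique decomposition, contradicting maximality. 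Combining, $|S|=|U|-s$ is at least $n-1$ whether $s=0$ (then $|S|=|U|\geq n-1$) or $s=1$ (then $|U|=n$ and $|S|=n-1$). The matching upper bound $\tp^-(L(K_n))\leq n-1$ is already supplied by the full-star construction, a terminal set of size $n-1$ for every $n\geq 4$. I expect the main obstacle to be Claim 2, which hinges on carefully exploiting both the star-piece structure (every edge of a star-piece contains the centre) and the disjointness of supports to rule out any valid choice of $g$.
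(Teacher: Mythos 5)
Your proposal is correct and follows essentially the same route as the paper: both decompose general position sets of $L(K_n)$ into vertex-disjoint triangles and stars of $K_n$, both hinge on the observation that the edge joining the centres of two stars cannot be terminal (so a terminal set contains at most one star, forcing $|S|\geq n-1$), and both use the full star at a vertex for the $\tp^-$ upper bound. Your treatment is somewhat more explicit than the paper's (the count $|S|=|U|-s$ and Claims 1--3 spell out what the paper asserts as maximality conditions, and you use the full star rather than a near-triangle-partition to realise $\tp(L(K_n))=n-1$ when $3\nmid n$), but these are minor variations, not a different argument.
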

	\begin{proof}
		We will think of general position sets of $L(K_n)$ in terms of edges of the complete graph $K_n$ with vertex set $[n]$. It is shown in~\cite{Ghorbani-2021} that the general position sets of $L(K_n)$ correspond to disjoint unions of triangles and stars in $K_n$ and that $\gp (L(K_n)) = n$ if $n$ is divisible by three and $\gp (L(K_n)) = n-1$ otherwise. In order for the general position set to be maximal, none of the stars can have order three (otherwise such a star can be extended to a triangle) and every vertex of $K_n$ is contained in an edge of $K_n$ lying in $S$, with the possible exception of a single vertex of $K_n$ if the rest of $S$ consists of triangles. 
		
		If $n \equiv 0 \pmod 3$, let $S$ be the disjoint union of $\frac{n}{3}$ triangles. If $n \equiv 1 \pmod 3$, we let $S$ be the disjoint union of $\frac{n-1}{3}$ triangles, and finally if $n \equiv 2 \pmod 3$, we let $S$ be the union of $\frac{n-2}{3}$ triangles and one other edge. These sets are maximum general position sets. Now let $e$ be any edge of $K_n$ that does not belong to one of these sets. Then $e$ must have at least one endpoint $x$ in a triangle $\{ x,y,z\} $ of $S$ and $e,xy,yz$ is a shortest path in $L(K_n)$ from $e$ containing two vertices of $S$. Thus these general position sets are terminal.
		
		Now let $S$ be a smallest possible terminal set of $L(K_n)$. $S$ can contain at most one star; otherwise, observe that the edge of $K_n$ that connects the centres of two stars would not be terminal with respect to $S$. Thus $S$ consists of a collection of triangles and at most one star, so that $|S| \geq n-1$. Now for $n \geq 4$, let $S$ be the edges of a star in $K_n$ of order $n$. Then any edge of $K_n$ not lying in $S$ connects two leaves $x,y$ of the star and, if $z$ is any other leaf and $u$ is the centre of the star, then $xy,xu,zu$ is a shortest path in $L(K_n)$ from $xy$ through two other points of $S$.
	\end{proof}
	For $L(K_n)$ the output of the algorithm in Theorem~\ref{thm:diam two} is $n-1$. We conclude by examining complete multipartite graphs and their products. 
	
	\begin{theorem}\label{thm:terminal complete multipartite}
		For $r \geq 2$ and $n_1 \geq n_2 \geq \dots \geq n_r \geq 2$, the terminal position number of a complete multipartite graph $K_{n_1,n_2,\dots ,n_r}$ is given by \[ \tp (K_{n_1,n_2,\dots ,n_r}) = \tp ^-(K_{n_1,n_2,\dots ,n_r}) = r.\]
	\end{theorem}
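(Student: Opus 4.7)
My plan is to exploit the fact that $K_{n_1,\dots,n_r}$ has diameter two (since $r\ge 2$), so by the characterization used in the proof of Theorem~\ref{thm:diam two}, every general position set is an independent union of cliques. Because two vertices of $K_{n_1,\dots,n_r}$ are non-adjacent precisely when they share a part, and any two cliques in different parts are connected by many edges, the only independent unions of cliques are of two types: \textbf{(I)} an independent subset of a single part $X_i$, or \textbf{(II)} a set picking at most one vertex from each part (which is then automatically a clique). The first step is to prove this dichotomy directly by observing that if $S$ contained two vertices from $X_i$ and one from $X_j$ with $j\neq i$, the path $u,v,w$ with $u,w\in X_i\cap S$ and $v\in X_j\cap S$ would be a shortest path showing $S$ is not in general position.

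Next I will identify the maximal general position sets. For Type~(I), the only maximal choice is $S=X_i$: any proper subset can be enlarged within $X_i$, while adding any vertex $u$ outside $X_i$ produces a forbidden path $u,x,u'$ with $x,x'\in X_i$ (using $n_i\ge 2$). For Type~(II), the maximal choices are precisely the full transversals $S=\{s_1,\dots,s_r\}$ with $s_i\in X_i$: if some part $X_j$ were unused, adding any $v\in X_j$ would keep $S$ a Type~(II) set and so preserve the no-three-in-line property, while a full transversal admits no further additions (a new vertex would have to duplicate some part $X_j$, immediately creating a shortest path of length two through two vertices of $S$).

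I will then test the terminal condition on each type. For $S=X_i$, consider any $u\in X_j$ with $j\neq i$. A shortest path from $u$ has length~$1$ (only two vertices, so at most one in $S$) or length~$2$, and a shortest path of length two from $u$ terminates at a vertex of $X_j$ (since $d(u,w)=2$ forces $w$ into the same part as $u$), so its terminal vertex lies outside $S$ and only the middle vertex can possibly lie in $S$. Hence $u$ is not $S$-terminal, and $X_i$ is \emph{never} a terminal set. Conversely, for a transversal $S=\{s_1,\dots,s_r\}$ and any $u\in X_i\setminus\{s_i\}$, the path $u,s_j,s_i$ for any $j\neq i$ has length two, is shortest since $d(u,s_i)=2$, and contains the two vertices $s_j,s_i$ of $S$ with $u$ as an endpoint; so every such transversal is terminal.

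The theorem then follows: every terminal set must be of Type~(II) (by the preceding paragraph), and every maximal Type~(II) set has cardinality exactly $r$, giving $\tp^-(K_{n_1,\dots,n_r})=\tp(K_{n_1,\dots,n_r})=r$. I do not anticipate any real obstacle here; the only point requiring care is verifying that for $S=X_i$ no length-two shortest path out of $u$ can capture two vertices of $X_i$, which uses the diameter-two structure in an essential way.
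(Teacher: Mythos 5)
Your proof is correct and follows essentially the same route as the paper: classify the maximal general position sets of $K_{n_1,\dots,n_r}$ as either a full partite set or a transversal containing one vertex from each part, show that only the transversals satisfy the terminal condition, and conclude that every terminal set has order exactly $r$. The only differences are that you supply the details of the classification (which the paper merely asserts) and that there is a small notational slip in the forbidden path witnessing maximality of a partite set, which should read $x,u,x'$ with $x,x'\in X_i$.
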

	\begin{proof}
		A maximal general position set $S$ in $K_{n_1,n_2,\dots ,n_r}$ consists either of all of the vertices in one of the partite sets, or else contains one vertex from each of the partite sets (and thus induces a clique). If $S$ is one of the partite sets, then no vertex of $V(K_{n_1,n_2,\dots ,n_r})-S$ is terminal with respect to $S$. However, if $S$ contains one vertex from each partite set, then any vertex $u \in V(K_{n_1,n_2,\dots ,n_r})-S$ has a shortest path to the vertex of $S$ lying in the same partite set via any other vertex of $S$. Thus the terminal position sets of complete multipartite graphs are the maximum cliques. 
	\end{proof}
	Theorem~\ref{thm:terminal complete multipartite} allows us to prove a realisation result that compares the lower general position, terminal position and general position numbers.
	\begin{corollary}
		For any $2 \leq a \leq b \leq c$ there exists a graph $G$ with \[ \gp ^-(G) = a, \tp (G) = \tp ^-(G) = b, \gp (G) = c.\]
	\end{corollary}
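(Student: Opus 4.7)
The plan is to realize the triple $(a,b,c)$ by a suitable complete multipartite graph, since Theorem~\ref{thm:terminal complete multipartite} already pins down the terminal position numbers in that family. I would take
\[
G \;=\; K_{c,\underbrace{a,a,\ldots,a}_{b-1}},
\]
the complete multipartite graph with $r = b$ parts, one of size $c$ and the remaining $b-1$ of size $a$. The hypotheses $2 \leq a \leq b \leq c$ guarantee that all part sizes are at least $2$ and that $n_1 = c \geq a = n_2 = \cdots = n_r$, so we are in the setting of Theorem~\ref{thm:terminal complete multipartite}.

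The first step is an immediate application of Theorem~\ref{thm:terminal complete multipartite}, which gives $\tp(G) = \tp^-(G) = r = b$.

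The second step is to read off $\gp(G)$ and $\gp^-(G)$. For this I would reuse the structural observation already established in the proof of Theorem~\ref{thm:terminal complete multipartite}: with all part sizes at least $2$, the maximal general position sets of $K_{n_1,\ldots,n_r}$ are exactly the partite sets $X_i$ and the transversals (one vertex from each partite set, inducing a clique). Hence the sizes of the maximal general position sets are precisely $\{n_1,\ldots,n_r,r\}$, so $\gp(G) = \max\{n_1,r\}$ and $\gp^-(G) = \min\{n_r,r\}$. In our construction, $n_1 = c \geq b = r$ and $n_r = a \leq b = r$, giving $\gp(G) = c$ and $\gp^-(G) = a$, as required.

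There is no substantial obstacle; the heavy lifting is already carried out by Theorem~\ref{thm:terminal complete multipartite} and the characterisation of maximal general position sets in its proof. The only thing to check carefully is the boundary behaviour (e.g.\ $a = b$, $b = c$, or $a = b = c$), which is handled uniformly by the inequalities $c \geq b \geq a \geq 2$.
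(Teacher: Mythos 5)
Your proposal is correct and matches the paper's proof: the paper also takes a complete $b$-partite graph with smallest part of size $a$ and largest part of size $c$ (your $K_{c,a,\ldots,a}$ is an instance) and invokes Theorem~\ref{thm:terminal complete multipartite}. Your explicit verification that the maximal general position sets are the partite sets and the transversal cliques, giving $\gp^-(G)=a$ and $\gp(G)=c$, just spells out what the paper leaves implicit.
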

	\begin{proof}
		By Theorem~\ref{thm:terminal complete multipartite} a complete $b$-partite graph with smallest part of size $a$ and largest part of size $c$ has the required parameters.   
	\end{proof}
	
	Theorem~\ref{thm:terminal complete multipartite} now enables us to find the lower general position numbers of a wide range of Cartesian products of complete multipartite graphs. The results of~\cite{rodriguez-2022} show that the lower general position number of the product of two complete multipartite graphs equals two if and only if either i) at least of the graphs is a complete bipartite graph, or ii) both graphs have a part with just two vertices.
	
	\begin{theorem}
		Let $G = K_{m_1,m_2,\dots ,m_r}$ and $H = K_{n_1,n_2,\dots ,n_s}$ be complete $r$- and $s$-partite graphs respectively, where $r,s \geq 2$, $m_1 \geq m_2 \geq \dots \geq m_r$, $n_1 \geq n_2 \geq \dots \geq n_s$ and $m_1,n_1 \geq 2$. Then the lower general position number of $G \cp H$ satisfies
		\[ \min \{ r,s,m_r,n_s\} \leq \gp ^-(G \cp H) \leq  \min \{ r,s,\max \{ m_r,n_s\} \}.\]
		If either $m_r = n_s$ or $\min \{ m_r,n_s\} \geq 8$, then \[ \gp ^-(G \cp H) = \min \{ r,s,m_r,n_s\} .\]
	\end{theorem}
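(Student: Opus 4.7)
I split the proof into upper bound, lower bound, and the equality claim, using Theorem~\ref{thm:terminal complete multipartite} (so $\tp^-(G) = r$ and $\tp^-(H) = s$) and the additive distance formula $d_{G \cp H}((a, b), (c, d)) = d_G(a, c) + d_H(b, d)$ throughout. For the upper bound, the inequality $\gp^-(G \cp H) \leq \min\{r, s\}$ is immediate from Lemma~\ref{lem:terminal upper bound}, so it remains to prove $\gp^-(G \cp H) \leq \max\{m_r, n_s\}$ in the regime $\max\{m_r, n_s\} < \min\{r, s\}$, which I do by construction. Assuming $m_r \geq n_s$, I take a diagonal $S = \{(x_i, y_i) : 1 \leq i \leq m_r\}$ where $\{x_1, \ldots, x_{m_r}\}$ is the smallest part of $G$ and the $y_i \in V(H)$ are chosen carefully. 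The additive formula makes $S$ automatically a general position set, and for maximality one shows that a vertex $(u, v) \notin S$ creates a three-in-line with two diagonal elements $(x_i, y_i), (x_j, y_j)$ precisely when $u$ lies on a shortest $x_i$-$x_j$-path in $G$ \emph{and} $v$ lies on a shortest $y_i$-$y_j$-path in $H$. Because the smallest part of $G$ is an independent set of size $m_r$ the first condition is essentially free, so the task reduces to a covering condition on $\{y_i\}$, which I satisfy by taking $\{y_i\}$ to be a union of parts of $H$ of total size $m_r$ (with small adjustments where such a decomposition is unavailable).

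For the lower bound, suppose for contradiction that $S$ is a maximal general position set of $G \cp H$ with $|S| < \min\{r, s, m_r, n_s\}$. Since $|S| < r$ and $|S| < s$, there are parts $X_{i^*}$ of $G$ and $Y_{j^*}$ of $H$ disjoint from $\pi_1(S)$ and $\pi_2(S)$ respectively. For any $u \in X_{i^*}$ and $v \in Y_{j^*}$, every element of $S$ is at distance exactly $2$ from $(u, v)$, so a three-in-line involving $(u, v)$ forces it to be the midpoint of a pair $(a_1, b_1), (a_2, b_2) \in S$ at mutual distance $4$, with $a_1 \neq a_2$ in a common part of $G$ and $b_1 \neq b_2$ in a common part of $H$. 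The bounds $|S| < m_r$ and $|S| < n_s$ imply no part of $G$ (respectively $H$) is exhausted by $\pi_1(S)$ (respectively $\pi_2(S)$), so a pigeonhole refinement of the choice of $u$ and $v$ avoids every such obstruction and yields an addable vertex, contradicting maximality.

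For the equality claim, the case $m_r = n_s$ is trivial as the upper and lower bounds coincide. When $\min\{m_r, n_s\} \geq 8$ and (without loss) $m_r \leq n_s$, the target is a maximal general position set of size $\min\{r, s, m_r\}$: if $m_r \geq \min\{r, s\}$ this comes from a terminal set of the smaller factor placed in a single layer, while if $m_r < \min\{r, s\}$ one uses the diagonal construction from the upper bound with $\{y_i\}$ now of size exactly $m_r$. The main obstacle is this last construction in the asymmetric regime $m_r < n_s$: the set $\{y_i\}$ must simultaneously be geodetic in $H$ and contain every vertex of each $H$-part that contains some $y_i$, so that every $(x_i, v) \notin S$ with $v$ in the $H$-part of $y_i$ is still caught in a three-in-line. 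The hypothesis $m_r \geq 8$ is precisely what supplies enough $y_i$'s to arrange them into doubled configurations across at least two distinct parts of $H$ while still meeting this stronger covering condition; below this threshold the combinatorics of complete multipartite $H$ is too tight to guarantee the construction.
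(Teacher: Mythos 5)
There are two genuine gaps, both in places where you have substituted a sketch for the actual combinatorial work. First, your constructions for the upper bound $\max\{m_r,n_s\}$ and for the equality case $\min\{m_r,n_s\}\geq 8$ both hinge on choosing $\{y_1,\dots,y_{m_r}\}$ to be a union of whole parts of $H$ (you even state that $\pi_2(S)$ must ``contain every vertex of each $H$-part that contains some $y_i$''). Such a decomposition of total size exactly $m_r$ simply need not exist --- for instance if every part of $H$ has more than $m_r$ vertices, or if the part sizes cannot be partitioned to sum to $m_r$ --- and the hypothesis $m_r\geq 8$ does nothing to repair this, since it constrains $G$, not the part sizes of $H$. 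The ``small adjustments'' you defer are the entire difficulty. The paper avoids the issue with different constructions: for the upper bound (with $n_s\geq m_r$) a ``hook'' $\{(x_1,y_1),\dots,(x_{m_r},y_{m_r}),(x_{m_r},y_{m_r+1}),\dots,(x_{m_r},y_{n_s})\}$ whose projections are the \emph{entire} smallest parts $X_r$ and $Y_s$; and for the case $m_r=\min\{m_r,n_s\}\geq 8$ a set of size $m_r$ using only \emph{four} vertices of $H$ (two from each of two distinct parts, not whole parts), with each of these four $y$-values shared by at least two vertices of $X_r$ --- it is this ``each $y$ doubled'' property, not a covering of parts of $H$, that makes every $(x_i,v)$ collinear with two set elements, and $m_r\geq 8$ is exactly what leaves at least two vertices in $X'=X_r-\{x_1,\dots,x_6\}$.

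Second, in the lower bound your ``pigeonhole refinement of the choice of $u$ and $v$'' cannot be carried out inside $X_{i^*}\times Y_{j^*}$: if $S$ contains a pair $(a_1,b_1),(a_2,b_2)$ at distance four with $a_1\neq a_2$ in a common part $X_c\neq X_{i^*}$ and $b_1\neq b_2$ in a common part $Y_d\neq Y_{j^*}$, then \emph{every} vertex of $X_{i^*}\times Y_{j^*}$ is adjacent to $a_1,a_2$ in the first coordinate and to $b_1,b_2$ in the second, hence every such vertex is a midpoint of that same pair; no choice of $u,v$ there avoids the obstruction. The correct move (which the paper makes) is to relocate to $X_c\times Y_d$: one first shows that the existence of such a pair forces $S\subseteq (X_c\times V(H))\cup(V(G)\times Y_d)$ together with further restrictions, then uses $|S|<m_r$ and $|S|<n_s$ to pick $\tilde x\in X_c-\pi_1(S)$ and $\tilde y\in Y_d-\pi_2(S)$, and verifies that $(\tilde x,\tilde y)$ is at distance at least three from all of $S$ and cannot be the endpoint of a length-four bad path because the candidate far endpoints in $X_c\times Y_d$ have no neighbours in $S$. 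Your outline identifies the right ingredients but the step that actually produces the addable vertex is missing, and as literally described it would fail.
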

	\begin{proof}
		Let $G$ and $H$ be as described in the statement of the theorem. Denote the parts of $G$ by $X_1,X_2,\dots ,X_r$ and the parts of $H$ by $Y_1,Y_2,\dots ,Y_s$, where $|X_i| = m_i$ and $|Y_j| = n_j$ for $1 \leq i \leq r$ and $1 \leq j \leq s$.

		We start by showing that $\gp ^-(G \cp H) \geq \min \{ r,s,m_r,n_s\} $. Suppose that $S$ is a maximal general position set of $G \cp H$ with $|S| < \min \{ r,s,m_r,n_s\} $. Then there are parts $X_a$ and $Y_b$, $1 \leq a \leq r$, $1 \leq b \leq s$, such that $S \cap (X_a \times V(H)) = S \cap (V(G) \times Y_b) = \emptyset $. Consider a vertex $(u,v)$ in $X_a \times Y_b$. As $(u,v) \notin S$, when we add $(u,v)$ to $S$ we must create three-in-a-line. 
		
		Any neighbour of $(u,v)$ must either have first coordinate in $X_a$ or second coordinate in $Y_b$, so $(u,v)$ has no neighbours in $S$. The diameter of $G \cp H$ is four and vertices $(x,y),(x^{\prime },y^{\prime })$ are distance four apart in $G \cp H$ if and only if $x,x^{\prime } $ are distinct vertices of some $X_i$ and $y,y^{\prime } $ are distinct vertices of some $Y_j$. Similarly, if $(x,y)$ and $(x^{\prime },y^{\prime })$ are at distance three, then either $x,x^{\prime }$ are in the same part of $G$ and $y,y^{\prime }$ are in different parts of $H$, or $x,x^{\prime }$ are in different parts of $G$ and $y,y^{\prime }$ are in the same part of $H$. Hence any vertex at distance three or four from $(u,v)$ has first coordinate in $X_a$ or second coordinate in $Y_b$. It follows that all vertices of $S$ are at distance two from $(u,v)$ and so $(u,v)$ is not the endpoint of a $((u,v),S)$-bad path.
		
		Hence we can assume that $(u,v)$ is the midpoint of a shortest path $Q$ of length four with both endpoints in $S$. Therefore the endpoints of $Q$ must be of the form $(x_1,y_1)$, $(x_2,y_2)$, where $x_1,x_2$ belong to some $X_c \neq X_a$ and $x_1 \neq x_2$, and likewise $y_1,y_2$ belong to some $Y_d \neq Y_b$ and $y_1 \neq y_2$. Considering the shortest paths between vertices of $S \cap (X_c \times Y_d)$, we see that $S$ cannot contain any vertex of $(V(G)-X_c) \times (V(H)-Y_d)$, and if $(x,y) \in S \cap (X_c \times Y_d)$, then $S \cap (\{ x\} \times (V(H)-Y_d)) = S \cap ((V(G)-X_c) \times \{ y\} ) = \emptyset $. 
		
		As $|S| < \min \{ r,s,m_r,n_s\} $, there exist $\tilde{x} \in X_c-\pi _1(S)$ and $\tilde{y} \in Y_d-\pi _2(S)$. The vertex $(\tilde{x},\tilde{y})$ is at distance at least three from any vertex of $S$, so by the maximality of $S$ it must be the endpoint of an $((\tilde{x},\tilde{y}),S)$-bad path $P$ of length four. The other endpoint $(x',y')$ of $P$ must lie in $X_c \times Y_d$. However, $(x',y')$ has no neighbours in $S$, so $P$ cannot be bad. It follows that we could add the vertex $(\tilde{x},\tilde{y})$ to $S$ without creating three-in-a-line, contradicting the maximality of $S$. Thus $|S| \geq \min \{ r,s,m_r,n_s\} $. 
		
		We now consider the existence of maximal general position sets of $G \cp H$ near the lower bound of $\min \{ r,s,m_r,n_s\} $. We will write $X_r = \{ x_1,x_2,\dots ,x_{m_r}\} $ and $Y_s = \{ y_1,y_2,\dots, y_{n_s}\} $ and will assume without loss of generality that $n_s \geq m_r$. We know from Lemma~\ref{lem:terminal upper bound} and Theorem~\ref{thm:terminal complete multipartite} that $\gp ^-(G \cp H) \leq \min \{ r,s\} $. The set \[ S = \{ (x_1,y_1),(x_2,y_2),\dots ,(x_{m_r},y_{m_r}),(x_{m_r},y_{m_r+1}),\dots ,(x_{m_r},y_{n_s})\} \] is a maximal general position set of order $n_s = \max \{ m_r,n_s\} $, so $\gp ^-(G \cp H) \leq \min \{ r,s,\max \{ m_r,n_s\} \}$. Hence if $m_r = n_s$, then $\gp ^-(G \cp H) = \min \{ r,s,m_r,n_s\} $. 
		
		Now suppose that $m_r = \min \{ m_r,n_s\} \geq 8$. Let $Y,Y'$ be distinct parts of $H$ and choose vertices $y_1,y_2 \in V(Y)$ and $y_3,y_4 \in V(Y')$. Then, setting $X' = X_r - \{ x_1,x_2,x_3,x_4,x_5,x_6\} $, it is easily seen that the set \[ S = \{ (x_1,y_1),(x_2,y_1),(x_3,y_2),(x_4,y_2),(x_5,y_3),(x_6,y_3)\} \cup (X' \times \{ y_4\} ) \] is a maximal general position set of order $m_r$. Hence if $\min \{ m_r,n_s\} \geq 8$ we have the equality $\gp ^-(G \cp H) = \min \{ r,s,m_r,n_s\} $.  
	\end{proof}

	\section*{Acknowledgements}
	The research of Eartha Kruft Welton was part of an EPSRC summer internship with the Open University (Project: General Position Problems for Graphs). The first and second authors are students at the University of Cardiff. The authors thank Grahame Erskine, Sandi Klav\v{z}ar and Ismael Yero for helpful discussions of these results.
	

\end{document}